\documentclass[article,12pt]{amsart}%
\usepackage{amscd}
\usepackage{bbm}
\usepackage{dsfont}
\usepackage{enumerate}
\usepackage{latexsym}
\usepackage{srcltx}
\usepackage{amsfonts}
\usepackage{amssymb}
\usepackage{datetime}
\usepackage{setspace}
\usepackage{enumerate}
\usepackage{amsthm}
\usepackage{graphicx}
\usepackage{epstopdf}
\usepackage{amsmath}%
\setcounter{MaxMatrixCols}{30}
\providecommand{\U}[1]{\protect\rule{.1in}{.1in}}

\newtheorem{theorem}{Theorem}[section]
\newtheorem{proposition}[theorem]{Proposition}
\newtheorem{lemma}[theorem]{Lemma}
\newtheorem{corollary}[theorem]{Corollary}

\theoremstyle{definition}
\newtheorem{example}[theorem]{Example}
\newtheorem{definition}[theorem]{Definition}

\newtheorem{remark} [theorem] {Remark}

\input epsf
\begin{document}
\title{ Weak sequential completeness in \\ Banach $C(K)$-modules of finite multiplicity.}
\author{Arkady Kitover}
\address{Community College of Philadelphia, 1700 Spring Garden St., Philadelphia, PA
19154, USA}
\email{akitover@ccp.edu}
\author{Mehmet Orhon}
\address{Department of Mathematics and Statistics, University of New Hampshire, Durham,
NH 03824, USA}
\email{mo@unh.edu}
\subjclass[2010]{Primary 46B20; Secondary 47B22, 46B42}
\date{\today}
\keywords{Weak sequential completeness, Banach $C(K)$-modules, Banach lattices}
\maketitle

\begin{abstract}
A well known result of Lozanovsky states that a Banach lattice is weakly
sequentially complete if and only if it does not contain a copy of $c_{0}$. In
the current paper we extend this result to the class of Banach $C(K)$-modules
of finite multiplicity and, as a special case, to finitely generated Banach
$C(K)$-modules. Moreover, we prove that such a module is weakly sequentially
complete if and only if each cyclic subspace of the module is weakly
sequentially complete.

\end{abstract}

\markboth{Arkady Kitover and Mehmet Orhon}{Weak sequential completeness}

\section{Introduction}

It is well known that some important properties of Banach spaces (reflexivity,
weak sequential completeness, et cetera) can be more readily studied if we
restrict our attention to the class of Banach lattices. In particular, it was
proved by Lozanovsky in~\cite{Loz} that a Banach lattice is reflexive if and
only if it does not contain a subspace \footnote{By subspace we always mean
closed subspace.} isomorphic to either $c_{0}$ or $l_{1}$, and that it is
weakly sequentially complete if and only if it does not contain a subspace
isomorphic to $c_{0}$. The above results cannot be extended to the class of
all Banach spaces: the famous James space~\cite{Ja} does not contain either
$l^{1}$ or $c_{0}$ but is neither reflexive nor (see~\cite[page 34]{LT2})
weakly sequentially complete.

Nevertheless, Lozanovsky's results remain true for arbitrary subspaces of
Banach lattices with order continuous norm or complemented subspaces of
arbitrary Banach lattices, see~\cite{Tz} and~\cite[Theorem 1.c.7, page
37]{LT2}. We consider similar problems for two other subclasses of the class
of Banach spaces, namely the class of finitely generated Banach $C(K)$-modules
(see Definition~\ref{d2} below) and the class of Banach $C(K)$-modules of
finite multiplicity (Definition~\ref{d8}). \footnote{It is worth emphasizing
that under the condition that every cyclic subspace is a $KB$-space the former
subclass is strictly smaller than the latter.} Because cyclic subspaces of
Banach $C(K)$-modules can be represented in a natural way as Banach lattices
we have reason to believe that many ``nice'' properties of Banach lattices
would carry over to Banach $C(K)$-modules of finite multiplicity. In particular,
the authors proved in~\cite{KO} that a Banach $C(K)$-module of finite
multiplicity is reflexive if and only if it does not contain a copy of either
$l^{1}$ or $c_{0}$. The goal of the current paper is to prove that
Lozanovsky's characterization of weak sequential completeness of Banach
lattices also remains true for such modules.

\bigskip

\section{Preliminaries}

\bigskip All the linear spaces will be considered either over the field of
real numbers $\mathds{R}$ or the field of complex numbers $\mathds{C}$. If $X$
is a Banach space we will denote its Banach dual by $X^{\star}$.

Let us recall some definitions.

\begin{definition}
\label{d1} Let $K$ be a compact Hausdorff space and $X$ be a Banach space. We
say that $X$ is a Banach $C(K)$-module if there is a continuous unital
homomorphism $m$ of $C(K)$ into the algebra $L(X)$ of all bounded linear
operators on $X$.
\end{definition}

\begin{remark}
\label{r1} Because $\ker{m}$ is a closed ideal in $C(K)$ by considering, if
needed, $C(\tilde{K}) = C(K)/\ker{m}$ we can and will assume without loss of
generality that $m$ is a contractive homomorphism (see~\cite{KO}) and $\ker{m}
= 0$. Then (see~\cite[Lemma 2 (2)]{HO}) $m$ is an isometry. Moreover, when it
does not cause any ambiguity we will identify $f \in C(K)$ and $mf \in L(X)$.
\end{remark}

\begin{definition}
\label{d2} Let $X$ be a Banach $C(K)$-module and $x \in X$. We introduce the
\textit{cyclic} subspace $X(x)$ of $X$ as $X(x) = cl\{fx : f \in C(K)\}$.
\end{definition}

The following proposition was proved in~\cite{Ve} (see also~\cite{Ra}) in the case
when the compact space $K$ is extremally disconnected and announced for an
arbitrary compact Hausdorff space $K$ in~\cite{Ka}. It follows as a special
case from~\cite[Lemma 2 (2)]{HO}.

\begin{proposition}
\label{p1} Let $X$ be a Banach $C(K)$-module, $x \in X$, and $X(x)$ be the
corresponding cyclic subspace. Then, endowed with the cone $X(x)_{+} = cl \{fx
: f \in C(K), f \geq0\}$ and the norm inherited from $X$, $X(x)$ is a Banach
lattice with positive quasi-interior point $x$.
\end{proposition}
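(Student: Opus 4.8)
The plan is to exhibit $X(x)$ as (isometrically order-isomorphic to) a concrete space of functions on which a lattice structure is visible, and then transport that structure back. First I would pass to the closed subalgebra generated by the image of $C(K)$ together with the identity inside $L(X(x))$; since $C(K)$ acts on $X(x)$ and $x$ is cyclic, the orbit $\{fx : f \in C(K)\}$ is dense in $X(x)$ by construction. The key idea is to consider the map $\Phi : C(K) \to X(x)$ given by $\Phi(f) = fx$. This is a bounded linear operator with dense range, and it is a $C(K)$-module homomorphism. The first main step is to understand the structure induced by $\Phi$: one introduces on $C(K)$ the seminorm $\|f\|_x = \|fx\|_X$ and shows that the completion of $(C(K)/N, \|\cdot\|_x)$, where $N = \{f : fx = 0\}$, is exactly $X(x)$.

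The second step, which I expect to be the technical heart, is to show that this seminorm (equivalently, the induced norm on the quotient) is a \emph{lattice} seminorm in the appropriate sense, so that the completion is a Banach lattice and the cone $X(x)_+ = cl\{fx : f \in C(K),\ f \geq 0\}$ is genuinely a lattice cone (closed, generating, proper, and compatible with the norm). The positivity of the homomorphism $m$ is not assumed, so one must work instead with the observation that $C(K)$ itself is a lattice and $N$ is an ideal: the delicate point is verifying that $N$ is an \emph{order} ideal of $C(K)$, i.e. that $|f| \leq |g|$ and $gx = 0$ imply $fx = 0$. This is where I would invoke the cited result \cite[Lemma 2 (2)]{HO}; without a structural input of this kind the quotient need not inherit the lattice order. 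Granting that $N$ is an order ideal, $C(K)/N$ is an Archimedean Riesz space with lattice seminorm $\|\cdot\|_x$, its norm completion is a Banach lattice, and because $\Phi$ has dense range this completion is isometrically lattice-isomorphic to $X(x)$ with the stated cone.

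Finally, I would check that the image of the constant function $\mathbbm{1}$, namely $x = \Phi(\mathbbm{1})$, is a quasi-interior point of $X(x)_+$: for any $f \in C(K)$ with $f \geq 0$ and any $\varepsilon > 0$, the element $(f \wedge n\mathbbm{1})x$ lies in the principal ideal generated by $x$ and converges in norm to $fx$ as $n \to \infty$ (using $\|(f - f\wedge n\mathbbm{1})x\|_X \leq \|f - f\wedge n\mathbbm{1}\|_\infty \|x\|_X \cdot \|m\|$ is \emph{not} quite enough since $f$ need not be bounded by a multiple of $\mathbbm{1}$ pointwise — but in fact $f \in C(K)$ \emph{is} bounded, so $f \leq \|f\|_\infty \mathbbm{1}$ and $f\wedge n\mathbbm{1} = f$ for $n \geq \|f\|_\infty$, making this immediate). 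Hence the ideal generated by $x$ contains every $fx$ with $f \geq 0$, so it is dense in $X(x)_+$ and therefore dense in $X(x)$; thus $x$ is a quasi-interior point. The one genuine obstacle is the order-ideal property of $N$ in the second step; everything else is a routine transfer of structure through a dense-range module homomorphism.
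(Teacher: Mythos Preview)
The paper does not give its own proof of this proposition; it simply records that the result is due to Veksler~\cite{Ve} (and Rall~\cite{Ra}) for extremally disconnected $K$, was announced for general $K$ by Kaijser~\cite{Ka}, and follows as a special case of~\cite[Lemma~2(2)]{HO}. So there is no argument in the paper to compare against beyond the bare citation.

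Your plan has the right architecture --- pushing the norm back to $C(K)$ via $\Phi(f)=fx$, completing, and identifying the result with $X(x)$ --- but there is a gap in the second step. You correctly isolate the seminorm $\|f\|_x=\|fx\|$ and its kernel $N$, and you flag a ``delicate point''. However, the property you single out --- that $N$ is an \emph{order ideal}, i.e.\ $|f|\le|g|$ and $gx=0$ imply $fx=0$ --- is strictly weaker than what you actually need. From $N$ being an order ideal you do get that $C(K)/N$ is an Archimedean Riesz space, but it does \emph{not} follow that $\|\cdot\|_x$ is a Riesz seminorm: for that you need the full monotonicity
\[
|f|\le|g|\ \Longrightarrow\ \|fx\|\le\|gx\|,
\]
not merely the special case $\|gx\|=0$. (A seminorm on a Riesz space can have trivial kernel --- certainly an order ideal --- and still fail to be a lattice seminorm; e.g.\ $p(a,b)=|a|+|a-b|$ on $\mathds{R}^2$ with the coordinatewise order.) Without this stronger inequality the completion has no reason to be a Banach \emph{lattice}, and the closed cone $X(x)_+$ you wrote down has no reason to be normal or to induce a lattice order. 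This monotonicity is exactly the nontrivial content supplied by the cited lemma in~\cite{HO}; once it is in hand, both the order-ideal property of $N$ and the lattice-seminorm property of $\|\cdot\|_x$ are immediate, and the remainder of your plan (including the quasi-interior argument for $x$, which is correct as written) goes through.
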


Our next proposition follows from Theorem 1 (3) in~\cite{Or}

\begin{proposition}
\label{p2} The center $Z(X(x))$ of the Banach lattice $X(x)$ is isometrically
isomorphic to the weak operator closure of $m(C(K))$ in $L(X(x))$.
\end{proposition}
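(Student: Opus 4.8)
The plan is to exhibit $Z(X(x))$ concretely as a commutative algebra of operators on $X(x)$, observe that $m(C(K))$ sits inside it, and then identify the weak operator closure of $m(C(K))$ with the whole centre; the last step carries the substance, and I would take it from~\cite[Theorem 1 (3)]{Or} applied to the Banach lattice of Proposition~\ref{p1}. First I would check that $m(C(K))$, restricted to $X(x)$, lies in $Z(X(x))$. By Definition~\ref{d2} the space $X(x)$ is $m(C(K))$-invariant, so each $m(f)$ acts as a bounded operator on $X(x)$; for real $f$ the functions $\|f\|_{\infty}\pm f$ are nonnegative, hence $m(\|f\|_{\infty}\pm f)$ maps the cone $X(x)_{+}=cl\{gx:g\ge 0\}$ into itself, that is $-\|f\|_{\infty}I\le m(f)\le\|f\|_{\infty}I$ among regular operators on the Banach lattice $X(x)$. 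Thus $m(f)\in Z(X(x))$ for every $f$ (splitting into real and imaginary parts in the complex case), and $m\colon C(K)\to Z(X(x))$ is a contractive unital algebra homomorphism. Since on $Z(X(x))$ the intrinsic centre norm coincides with the operator norm inherited from $L(X(x))$ — and $Z(X(x))$, being an $AM$-space with unit $I$, is isometrically a $C(Q)$ by Kakutani's theorem — the proposition reduces to showing that the weak operator closure of $m(C(K))$ in $L(X(x))$ is exactly $Z(X(x))$.

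One inclusion is the easier half. For each $\lambda\ge 0$ the interval $\{T\in L(X(x)):-\lambda I\le T\le\lambda I\}$ is weak operator closed, since the inequalities $\langle(\lambda I\mp T)y,\varphi\rangle\ge 0$ with $y\in X(x)_{+}$ and $0\le\varphi\in X(x)^{\star}$ pass to weak operator limits, and $Z(X(x))$ is the union of these intervals; so any weak operator limit of a norm-controlled net from $m(C(K))$ lands in $Z(X(x))$. For the reverse inclusion one must show that every central operator on $X(x)$ is a weak operator limit of elements of $m(C(K))$, and this is where the cyclicity of $x$ is used. By Proposition~\ref{p1} $x$ is a quasi-interior point, and by Definition~\ref{d2} it is a cyclic vector, so $cl\{m(f)x:f\in C(K)\}=X(x)$. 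Consider a functional $T\mapsto\sum_{i}\langle Ty_{i},\varphi_{i}\rangle$ on $Z(X(x))$ that annihilates $m(C(K))$: replacing each $y_{i}$ by a vector $m(g_{i})x$ close to it, using that the centre commutes with $m(C(K))$, and introducing the Radon measures $\mu_{i}\colon h\mapsto\langle m(h)x,\varphi_{i}\rangle$ on $K$, one finds that the measure $\sum_{i}g_{i}\mu_{i}$ is (in the limit) zero, and hence the functional annihilates all of $Z(X(x))$; Hahn--Banach then yields the required density. In total this is the content of~\cite[Theorem 1 (3)]{Or}.

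The genuine obstacle is precisely this density, and it is a weak-operator, not a norm, phenomenon: the $\sup$-norm closure of $m(C(K))$ inside $Z(X(x))\cong C(Q)$ may be a \emph{proper} unital subalgebra $C(Q_{0})$ — as happens already when $C[0,1]$ acts by multiplication on $X(x)=L^{1}[0,1]$, where $Z(X(x))\cong L^{\infty}[0,1]$ while the multipliers coming from $C[0,1]$ are far from sup-norm dense — so one must approximate a central operator $T$ of centre norm at most $1$ by multipliers $m(f)$ whose sup norms run off to infinity while $m(f)$ stays weak operator close to $T$, and the matching uniform boundedness point in the first inclusion has to be dealt with simultaneously. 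Establishing this Kaplansky-type density, placing $m(C(K))$ weak operator densely in the order intervals that generate $Z(X(x))$, is the heart of the matter, and I would quote~\cite[Theorem 1 (3)]{Or} for it rather than reproduce it.
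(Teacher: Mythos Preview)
Your approach is essentially the same as the paper's: the paper offers no argument at all beyond the sentence ``follows from Theorem 1 (3) in~\cite{Or}'', and you likewise rest the substantive step on that citation. Your additional discussion --- verifying $m(C(K))\subset Z(X(x))$, noting that order intervals $[-\lambda I,\lambda I]$ are weak operator closed, and sketching a Hahn--Banach separation argument for density --- is correct in outline, and you are right to flag that the Kaplansky-type boundedness issue (both for the inclusion and for the density) is where the real work hides and is precisely what~\cite[Theorem 1 (3)]{Or} supplies.
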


Now we can introduce one of the two main objects of interest in the current paper.

\begin{definition}
\label{d3} Let $X$ be a Banach $C(K)$-module. We say that $X$ is finitely
generated if there are an $n \in\mathds{N}$ and $x_{1}, \ldots, x_{n} \in X$
such that the set $\sum\limits_{i=1}^{n} X(x_{i})$ is dense in $X$.
\end{definition}

Before we proceed with the statement and the proof of our main results
(Theorem~\ref{t1} and Theorem~\ref{t2}) we need to introduce a few more
definitions and recall a couple of results.

\begin{definition}
\label{d5} Let $X$ be a Banach space and $\mathcal{B}$ be a Boolean algebra of
projections on $X$. The algebra $\mathcal{B}$ is called Bade complete
(see~\cite[XVII.3.4, p.2197]{DS} and~\cite[V.3, p.315]{Sch}) if

(1) $\mathcal{B}$ is a complete Boolean algebra.

(2) Let $\{\chi_{\gamma}\}_{\gamma\in\Gamma}$ be an increasing net in
$\mathcal{B}$, $\chi$ be the supremum of this net, and $x\in X$. Then the net
$\{\chi_{\gamma}x\}$ converges to $\chi x$ in norm in $X$.
\end{definition}

The following result was proved in~\cite[Theorem 3 and Remark 5]{Or1}.

\begin{theorem}
\label{t0}

Let $X$ be a Banach space, $K$ be a compact Hausdorff space, and $m$ be an
isometric embedding of $C(K)$ into $L(X)$. Assume that no cyclic subspace of $X$ contains a
copy of $c_{0}$. Then

(1) The closure of $m(C(K))$ in the weak operator topology is isometrically
isomorphic to $C(Q)$ where $Q$ is a hyperstonian space.

(2) The algebra of all idempotents in $C(Q)$ is a Bade complete Boolean
algebra of projections on $X$.
\end{theorem}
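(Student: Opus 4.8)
The plan is to analyze the weak operator closure $\mathcal{M}:=\overline{m(C(K))}^{\,\mathrm{wot}}$ of $m(C(K))$ in $L(X)$ by restricting it to the cyclic subspaces of $X$, where Propositions~\ref{p1} and~\ref{p2} together with the hypothesis supply strong structural information. Since multiplication in $L(X)$ is separately weak operator continuous, $\mathcal{M}$ is a commutative unital subalgebra of $L(X)$. Each cyclic subspace $X(x)$ is norm closed and convex, hence weakly closed, and it is $m(C(K))$-invariant, so it is $\mathcal{M}$-invariant; extending functionals from $X(x)$ to $X$ by Hahn--Banach shows that weak operator convergent nets restrict to weak operator convergent nets, so that $T\mapsto T|_{X(x)}$ carries $\mathcal{M}$ into the weak operator closure of $m(C(K))$ in $L(X(x))$, which by Proposition~\ref{p2} is the center $Z(X(x))$ of the Banach lattice $X(x)$ (Proposition~\ref{p1}). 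Because $X(x)$ contains no copy of $c_{0}$ it is a $KB$-space; in particular it has order continuous norm, it is a band in its (Dedekind complete) bidual and hence is itself Dedekind complete, and every norm bounded increasing net in it converges in norm. Consequently $Z(X(x))$ is isometrically isomorphic, as a Banach lattice, to $C(Q_{x})$ for an extremally disconnected compact space $Q_{x}$.

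Next I would realize $\mathcal{M}$ as a single $C(Q)$-space. The restriction maps assemble into an isometric unital algebra homomorphism of $\mathcal{M}$ into the product $\prod_{x\in X}Z(X(x))$ taken with the supremum norm --- isometric because every $y\in X$ lies in $X(y)$, whence $\|T\|=\sup_{x}\|T|_{X(x)}\|$. Its image is norm closed and, in the complex case, closed under coordinatewise conjugation, the latter because conjugation is weak operator continuous on norm bounded subsets of each center $Z(X(x))=C(Q_{x})$. Since $\prod_{x}Z(X(x))$ is an AM-space with unit, a Stone--Weierstrass argument then identifies $\mathcal{M}$, isometrically and as a Banach lattice, with $C(Q)$ for some compact Hausdorff space $Q$.

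The core of the proof is to show that bounded increasing nets in $\mathcal{M}=C(Q)$ converge in the strong operator topology of $L(X)$. Let $(T_{\gamma})$ be a norm bounded increasing net in the positive cone of $\mathcal{M}$ and fix $x\in X$; since $x\ge 0$ in the lattice $X(x)$ and each $T_{\gamma}|_{X(x)}\in Z(X(x))$ is a positive operator, $(T_{\gamma}x)$ is an increasing net in the $KB$-space $X(x)$ with $\|T_{\gamma}x\|\le\big(\sup_{\gamma}\|T_{\gamma}\|\big)\|x\|$, so it converges in norm, say to $Px$. By linearity and the uniform bound, $P$ is a bounded operator on $X$ commuting with $m(C(K))$, and $T_{\gamma}\to P$ strongly, hence weakly; as $\mathcal{M}$ is weak operator closed, $P\in\mathcal{M}$ and $P=\sup_{\gamma}T_{\gamma}$ in $C(Q)$. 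Therefore $C(Q)$ is Dedekind complete, i.e. $Q$ is extremally disconnected, and the Boolean algebra $\mathcal{B}$ of idempotents of $C(Q)$ is a complete Boolean algebra of projections on $X$; applying the computation above to an increasing net $\chi_{\gamma}\uparrow\chi$ in $\mathcal{B}$ gives $\chi_{\gamma}x\to\chi x$ in norm for every $x\in X$, which is precisely the Bade convergence property of Definition~\ref{d5}. This proves~(2). For~(1) it remains to see that $Q$ is hyperstonian: for $x\in X$ and $\phi\in X^{\star}$ the functional $T\mapsto\langle Tx,\phi\rangle$ on $C(Q)$ is order continuous by the strong convergence just established, and these functionals separate the points of $\mathcal{M}$ (if $Tx=0$ for all $x\in X$ then $T=0$); hence the normal measures on $Q$ are point separating, which together with the extremal disconnectedness of $Q$ means that $Q$ is hyperstonian.

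The step I expect to be the main obstacle is this last ``core'' argument: converting the purely isomorphic hypothesis that no cyclic subspace contains $c_{0}$ --- via the $KB$-property of the cyclic subspaces --- into genuine Dedekind completeness of $\mathcal{M}$, and verifying that the operator $P$ assembled separately on each $X(x)$ really is bounded on all of $X$ and belongs to $\mathcal{M}$. A secondary technical point is the passage in the second paragraph from the family of centers $\{Z(X(x))\}$ to a single $C(Q)$ (norm closedness, conjugation invariance, and the sublattice property of the image in the product). The hypothesis is genuinely needed: without it $\mathcal{M}$ need not be Dedekind complete, as already $C[0,1]$ acting by multiplication on $c_{0}$ shows.
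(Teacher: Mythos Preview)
The paper does not supply a proof of Theorem~\ref{t0}; the result is quoted from~\cite[Theorem 3 and Remark 5]{Or1} and used as a black box. There is therefore no in-paper argument to compare your plan against.

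That said, your outline is a reasonable reconstruction and its architecture is sound: restrict to cyclic subspaces, use the $KB$-property (from the absence of $c_0$) to force norm convergence of bounded monotone nets, deduce Dedekind completeness and the Bade convergence property, and finally read off hyperstonianness from the resulting supply of order-continuous functionals $T\mapsto\langle Tx,\phi\rangle$. The step you flag as the ``main obstacle''---the core monotone-net argument---is actually the easy part once the rest is in place: since $x$ is a positive quasi-interior point of $X(x)$ and $T_\gamma|_{X(x)}$ is positive central, the $KB$-property gives norm convergence of $T_\gamma x$ immediately, and $P$ is then simply a strong operator limit of a uniformly bounded net.

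The point that genuinely needs more care is the one you call ``secondary'': the identification of $\mathcal{M}$ with some $C(Q)$. Your justification for conjugation-closedness of the image in $\prod_x Z(X(x))$ is incomplete. Weak operator continuity of conjugation on each $Z(X(x))$ tells you only that the coordinatewise conjugate $(T|_{X(x)})^{*}$ lies in $Z(X(x))$ for each $x$; it does not by itself guarantee that these pieces assemble into a single bounded operator on $X$, let alone one in $\mathcal{M}$, because the conjugations on different cyclic subspaces $X(x)$ are defined via \emph{different} lattice structures and need not be restrictions of one another. A cleaner way to organize this is to run your monotone-net argument first, directly for bounded increasing nets in $m(C(K))_+$ (ordered via $C(K)$); this already produces a Bade complete Boolean algebra of projections inside $\mathcal{M}$, after which $\mathcal{M}$ is identified with $C(Q)$ via the Stone space of that algebra and the standard Bade--Dunford--Schwartz machinery~\cite[XVII.3]{DS}. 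That route, which is closer to~\cite{Or1}, avoids the coherence problem entirely.
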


\begin{remark}
\label{r2} In virtue of Theorem~\ref{t0} from now on, in this section, we will
assume that the compact Hausdorff space $K$ is hyperstonian and that $\mathcal {B}$, the algebra of all idempotents in $C(K)$, is a Bade complete Boolean algebra of projections on $X$.
\end{remark}

\begin{remark}
\label{r3} Assume conditions of Remark~\ref{r2}. Because $K$ is hyperstonian
$C(K)$ is a dual Banach space (see e.g.~\cite[Theorem 9.3, page 122]{Sch}).
Let us denote its predual by $C(K)_{\star}$. Because $C(K)_{\star}$ is an
$AL$-space it is isometrically isomorphic to a band in its second dual, i.e.,
to a band in $C(K)^{\star}$ (see~\cite[Page 92]{Sch}).

Next notice (see~\cite{Or}) that the center $Z(C(K)^{\star})$ of $C(K)^{\star
}$ is isometrically isomorphic to $C(K)^{\star\star}$. We will denote by $p$
the idempotent in $C(K)^{\star\star}$ corresponding to the band projection in
$C(K)^{\star}$ onto the band $C(K)_{\star}$.
\end{remark}

\begin{remark}
\label{r6} Assume conditions of Remark~\ref{r2} and assume also that $X$ is
cyclic. Then by~\cite{Ve} (see also~\cite[V.3]{Sch}), $X$ can be represented
as a Banach lattice with order continuous norm and a positive quasi-interior
point such that $\mathcal{B}$ coincides with the algebra of all band
projections on $X$.
\end{remark}

\begin{remark}
\label{r4} Consider $X^{\star}$ as a Banach $C(K)$-module as follows. Let
$m^{\star}:C(K)\rightarrow L(X^{\star})$ be the isometric unital algebra and
lattice homomorphism that describes this module structure. That is, for each
$a\in C(K)$, let $m^{\star}(a)$ be the Banach space adjoint of $a$ acting as a
bounded operator on $X$. Hence $ax^{\star}(x)=m^{\star}(a)(x^{\star
})(x)=x^{\star}(ax)$ for all $x\in X$ and $x^{\star}\in X^{\star}$.

Furthermore, by means of the Arens extension procedure~\cite{Ar}, we will
consider $X^{\star\star}$ as a Banach $C(K)^{\star\star}$-module. Namely, for
each $x^{\star}\in X^{\star}$, $x^{\star\star}\in X^{\star\star}$, $a\in
C(K)$, and $a^{\star\star}\in C(K)^{\star\star}$, we define $\mu_{x^{\star
},x^{\star\star}}\in C(K)^{\star}$ and $a^{\star\star}\cdot x^{\star\star}\in
X^{\star\star}$ as follows%
\begin{align*}
\mu_{x^{\star},x^{\star\star}}(a)  &  =x^{\star\star}(ax^{\star}),\\
a^{\star\star}\cdot x^{\star\star}(x^{\star})  &  =a^{\star\star}%
(\mu_{x^{\star},x^{\star\star}}).
\end{align*}

\end{remark}

We will list the relevant easily checked properties of the above definitions
in the following lemma.

\begin{lemma}
\label{l1} Let $x^{\star}\in X^{\star},$ $x^{\star\star}\in X^{\star\star}$,
and $a^{\star\star},b^{\star\star}\in C(K)^{\star\star}.$ Then

\begin{enumerate}
\item $a^{\star\star}\mu_{x^{\star},x^{\star\star}}=\mu_{x^{\star}%
,a^{\star\star}\cdot x^{\star\star}}$ ;

\item $(a^{\star\star}b^{\star\star})\cdot x^{\star\star}=a^{\star\star}%
\cdot(b^{\star\star}\cdot x^{\star\star})$ ;

\item We have $x^{\star\star}\in p\cdot X^{\star\star}$ if and only if
$\mu_{x^{\star},x^{\star\star}}\in C(K)_{\ast}$ for all $x^{\star}$.
\end{enumerate}
\end{lemma}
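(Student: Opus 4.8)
The three assertions of Lemma~\ref{l1} are all of the ``unwind the definitions'' variety, so the plan is to verify each by direct computation against the defining formulas in Remark~\ref{r4}, being careful about which module action is in play.

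For part (1): fix $x^\star \in X^\star$ and $x^{\star\star}\in X^{\star\star}$, and evaluate both sides on an arbitrary $a \in C(K)$. On the right we have, by definition, $\mu_{x^\star,\,a^{\star\star}\cdot x^{\star\star}}(a) = (a^{\star\star}\cdot x^{\star\star})(a x^\star) = a^{\star\star}\bigl(\mu_{a x^\star,\, x^{\star\star}}\bigr)$. On the left we have $(a^{\star\star}\mu_{x^\star,x^{\star\star}})(a) = a^{\star\star}\bigl(a \cdot \mu_{x^\star,x^{\star\star}}\bigr)$, where the product $a\cdot\mu$ inside $C(K)^{\star\star}$ is the usual $C(K)$-module action on $C(K)^\star$. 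So the identity reduces to the pointwise claim $a\cdot \mu_{x^\star,x^{\star\star}} = \mu_{ax^\star,x^{\star\star}}$ in $C(K)^\star$, which one checks by testing on $b \in C(K)$: $(a\cdot\mu_{x^\star,x^{\star\star}})(b) = \mu_{x^\star,x^{\star\star}}(ab) = x^{\star\star}((ab)x^\star) = x^{\star\star}(b(ax^\star)) = \mu_{ax^\star,x^{\star\star}}(b)$, using associativity of the $C(K)$-action on $X^\star$. This closes part (1), and in fact part (1) is the key lemma that feeds the other two.

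For part (2): evaluate both sides on $x^\star$. The right-hand side is $\bigl(a^{\star\star}\cdot(b^{\star\star}\cdot x^{\star\star})\bigr)(x^\star) = a^{\star\star}\bigl(\mu_{x^\star,\, b^{\star\star}\cdot x^{\star\star}}\bigr)$, and by part (1) this equals $a^{\star\star}\bigl(b^{\star\star}\mu_{x^\star,x^{\star\star}}\bigr)$. The left-hand side is $\bigl((a^{\star\star}b^{\star\star})\cdot x^{\star\star}\bigr)(x^\star) = (a^{\star\star}b^{\star\star})\bigl(\mu_{x^\star,x^{\star\star}}\bigr)$. These agree because $C(K)^{\star\star}$ is (commutative and) associative as the bidual algebra / the center $Z(C(K)^\star)$ acting on $C(K)^\star$, so $(a^{\star\star}b^{\star\star})(\nu) = a^{\star\star}(b^{\star\star}\nu)$ for $\nu = \mu_{x^\star,x^{\star\star}}$. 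For part (3): by part (1) with $a^{\star\star} = p$, we have $\mu_{x^\star,\, p\cdot x^{\star\star}} = p\,\mu_{x^\star,x^{\star\star}}$ for every $x^\star$; and $p$ is, by Remark~\ref{r3}, precisely the band projection of $C(K)^\star$ onto $C(K)_\star$. Hence $x^{\star\star} = p\cdot x^{\star\star}$ iff $\mu_{x^\star,p\cdot x^{\star\star}} = \mu_{x^\star,x^{\star\star}}$ for all $x^\star$ iff $p\,\mu_{x^\star,x^{\star\star}} = \mu_{x^\star,x^{\star\star}}$ for all $x^\star$ iff $\mu_{x^\star,x^{\star\star}} \in C(K)_\star$ for all $x^\star$; the forward direction of the first ``iff'' needs the elementary fact that $x^{\star\star}$ is determined by the functionals $x^\star \mapsto (a^{\star\star}\cdot x^{\star\star})(x^\star)$, i.e. that $\mu_{x^\star,x^{\star\star}} = \mu_{x^\star,y^{\star\star}}$ for all $x^\star$ forces $x^{\star\star} = y^{\star\star}$ (take $a = 1$, so $\mu_{x^\star,x^{\star\star}}(1) = x^{\star\star}(x^\star)$).

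The only genuine subtlety — what I would flag as the main thing to get right rather than a real obstacle — is the Arens-product bookkeeping: the first Arens extension of a commutative algebra need not be commutative, so one must make sure the manipulations in parts (1) and (2) use only \emph{associativity} (left-module associativity of the Arens action on $C(K)^\star$ and on $X^{\star\star}$), which does always hold, and never secretly invoke commutativity of $C(K)^{\star\star}$ in a way that isn't justified. Since $C(K)$ here is $C$ of a hyperstonian space and $C(K)^{\star\star} = Z(C(K)^\star)$ is a genuine commutative von Neumann-type algebra (Remark~\ref{r3}), the product is in fact commutative and all the module identities are Arens-regular, so these identities hold; but the cleanest write-up isolates exactly the associativity of the action of $C(K)$ on $X^\star$ (which is just the module axiom) as in the computation for part (1), and then derives (2) and (3) formally from (1). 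That is why I would prove (1) first, in full, and present (2) and (3) as short consequences.
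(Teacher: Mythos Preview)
Your proof is correct and follows essentially the same route as the paper's: both derive (2) and (3) as formal consequences of (1), and the verification of (3) via $p\,\mu_{x^\star,x^{\star\star}} = \mu_{x^\star,\,p\cdot x^{\star\star}}$ is identical. The one cosmetic difference is in (1): the paper picks a net $\{a_\alpha\}\subset C(K)$ with $a_\alpha \to a^{\star\star}$ in $\sigma(C(K)^{\star\star},C(K)^\star)$ and evaluates both sides as $\lim_\alpha x^{\star\star}(a_\alpha a x^\star)$, whereas you bypass the net by first isolating the identity $a\cdot\mu_{x^\star,x^{\star\star}} = \mu_{ax^\star,x^{\star\star}}$ in $C(K)^\star$ and then applying $a^{\star\star}$; your version is slightly cleaner and makes explicit exactly which module-associativity is being used, but the content is the same.
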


\begin{proof}
(1) Given $a^{\star\star}$ in $C(K)^{\star\star}$ let $\{a_{\alpha}\}$ be a
net in $C(K)$ that converges to $a^{\star\star}$ in the $\sigma(C(K)^{\star
\star},C(K)^{\star})$-topology. Then, for any $a\in C(K),$ we have%
\begin{align*}
a^{\star\star}\mu_{x^{\star},x^{\star\star}}(a)  &  =\underset{}{a^{\star
\star}(a\mu_{x^{\star},x^{\star\star}})=}\underset{\alpha}{\lim}\mu_{x^{\star
},x^{\star\star}}(a_{\alpha}a)=\underset{\alpha}{\lim}x^{\star\star}%
(a_{\alpha}ax^{\star});\\
\mu_{x^{\star},a^{\star\star}\cdot x^{\star\star}}(a)  &  =a^{\star\star}\cdot
x^{\star\star}(ax^{\star})=a^{\star\star}(\mu_{ax^{\star},x^{\star\star}%
})=\underset{\alpha}{\lim}\mu_{ax^{\star},x^{\star\star}}(a_{\alpha
})=\underset{\alpha}{\lim}x^{\star\star}(a_{\alpha}ax^{\star}).
\end{align*}

(2) Applying part (1) when necessary we have for any $a\in C(K)$,%
\begin{align*}
(a^{\star\star}b^{\star\star})\cdot x^{\star\star}(ax^{\star})  &
=\mu_{x^{\star},(a^{\star\star}b^{\star\star})\cdot x^{\star\star}%
}(a)=(a^{\star\star}b^{\star\star})\mu_{x^{\star},x^{\star\star}}%
(a)=a^{\star\star}(b^{\star\star}\mu_{x^{\star},x^{\star\star}})(a)\\
&  =\mu_{x^{\star},a^{\star\star}\cdot(b^{\star\star}\cdot x^{\star\star}%
)}(a)=a^{\star\star}\cdot(b^{\star\star}\cdot x^{\star\star})(ax^{\star}).
\end{align*}

(3) Suppose $x^{\star\star}\in p\cdot X^{\star\star}$. Then $x^{\star\star
}=p\cdot x^{\star\star}.$ Hence, by part (1),%
\[
\mu_{x^{\star},x^{\star\star}}=\mu_{x^{\star},p\cdot x^{\star\star}}%
=p\mu_{x^{\star},x^{\star\star}}\in C(K)_{\ast}.
\]
Conversely, suppose $\mu_{x^{\star},x^{\star\star}}\in C(K)_{\ast}$ for all
$x^{\star}\in X^{\star}$. Then $\mu_{x^{\star},x^{\star\star}}=p\mu_{x^{\star
},x^{\star\star}}=\mu_{x^{\star},p\cdot x^{\star\star}}$ for all $x^{\star}\in
X^{\star}$. It follows that $x^{\star\star}=p\cdot x^{\star\star}$.
\end{proof}

\begin{lemma}
\label{l2} Let $X$ be a Banach lattice. Then $p\cdot X^{\star\star}$ is equal
to $(X^{\star})_{n}^{\star}$.
\end{lemma}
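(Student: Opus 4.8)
The plan is to read off both inclusions from Lemma~\ref{l1}(3) once $X$ and $X^\star$ are put into concrete function-space form. By Remark~\ref{r6} (reducing to the cyclic case if necessary, or invoking the representation of Dedekind complete Banach lattices with order continuous norm) we may assume that $X$ is an order dense ideal of $L^0(K,\mu)$, where $\mu$ is the localizable hyperstonian measure on $K$ with $C(K)=L^\infty(K,\mu)=Z(X)$, the quasi-interior point of $X$ is the constant function $1$, and $C(K)$ acts on $X$ by pointwise multiplication; consequently $C(K)_\star=L^1(K,\mu)$, and membership in $C(K)_\star$ is exactly order continuity on $C(K)$. Since $X$ has order continuous norm, $X^\star$ is the order continuous dual of $X$, i.e. the K\"othe dual $X'$ of $X$ with respect to $\mu$, again a Banach function space over $(K,\mu)$, and a direct check ($(a^\star g)(f)=g(af)=\int afg\,d\mu=\int (ag)f\,d\mu$) shows that the $C(K)$-module structure on $X^\star$ coming from Remark~\ref{r4} is pointwise multiplication as well. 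Finally $(X^\star)^\star_n$ is the order continuous dual of $X^\star$, hence the K\"othe dual $(X^\star)'$, realized as the set of $\mu$-measurable $h$ with $hg\in L^1(\mu)$ for all $g\in X^\star$, acting by $h(g)=\int hg\,d\mu$.

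Granting this dictionary, the inclusion $(X^\star)^\star_n\subseteq p\cdot X^{\star\star}$ is immediate: for $h\in (X^\star)^\star_n$, $g\in X^\star$ and $a\in C(K)$ we have $\mu_{g,h}(a)=h(ag)=\int (hg)\,a\,d\mu$, so $\mu_{g,h}$ is the functional on $C(K)$ with density $hg\in L^1(\mu)=C(K)_\star$; by Lemma~\ref{l1}(3), $h\in p\cdot X^{\star\star}$.

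For the converse, take $x^{\star\star}\in p\cdot X^{\star\star}$, so that by Lemma~\ref{l1}(3) one has $\mu_{g,x^{\star\star}}\in C(K)_\star$ for every $g\in X^\star$; I must show $x^{\star\star}$ is order continuous on $X^\star$. Let $g_\gamma\downarrow 0$ in $X^\star$; since the net is eventually dominated by some fixed $g_0\in X^\star_+$ we may assume $0\le g_\gamma\le g_0$ for all $\gamma$. The key point is that, in the function realization, $b_\gamma:=g_\gamma/g_0$ on $\{g_0>0\}$ and $b_\gamma:=0$ elsewhere defines a net in $L^\infty(K,\mu)=C(K)$ with $0\le b_\gamma\le 1$ and $g_\gamma=b_\gamma g_0$, and moreover $b_\gamma\downarrow 0$ in $C(K)$: if $b=\inf_\gamma b_\gamma$ in $C(K)$ then $b g_0\le b_\gamma g_0=g_\gamma$ for every $\gamma$, so $bg_0\le\inf_\gamma g_\gamma=0$ and hence $b=0$. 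Since $\mu_{g_0,x^{\star\star}}\in C(K)_\star$ is order continuous on $C(K)$,
\[
x^{\star\star}(g_\gamma)=x^{\star\star}(b_\gamma g_0)=\mu_{g_0,x^{\star\star}}(b_\gamma)\longrightarrow 0 ,
\]
which shows $x^{\star\star}\in (X^\star)^\star_n$. Together with the previous paragraph this proves $p\cdot X^{\star\star}=(X^\star)^\star_n$.

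The genuinely delicate part is not either of the two short computations but the first paragraph: making precise that $X$ (and therefore $X^\star$) may be taken as a Banach function space over the hyperstonian measure space $(K,\mu)$ with $C(K)=L^\infty(K,\mu)$ acting by multiplication, that $C(K)_\star=L^1(K,\mu)$ consists exactly of the order continuous functionals on $C(K)$, and --- if $X$ is not given as cyclic --- carrying out the reduction to the cyclic case. Once that setup is secured, the argument uses only Lemma~\ref{l1}(3), the identification of the order continuous dual of a Banach function space with its K\"othe dual, and the elementary observation that a relation $0\le g\le g_0$ in such a space is witnessed by an $L^\infty$-multiplier.
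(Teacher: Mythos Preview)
Your argument has a genuine gap in the first paragraph, and it is not merely a matter of ``making precise'' the function-space dictionary. The lemma is stated for an \emph{arbitrary} Banach lattice $X$: no order continuity of the norm, no quasi-interior point, no Dedekind completeness. Remark~\ref{r6} does not help you here, since it presupposes that $X$ is cyclic and that the idempotents of $C(K)$ form a Bade complete Boolean algebra of projections on $X$ --- hypotheses that simply are not part of Lemma~\ref{l2}. For, say, $X=C[0,1]$ your setup (``$X$ an order dense ideal of $L^0(K,\mu)$ with $C(K)=Z(X)$ hyperstonian and $X^{\star}$ equal to the K\"othe dual'') fails outright. The identification $C(K)=Z(X)$ is also the wrong anchor: the paper takes $C(K)=Z(X^{\star})$, which is always hyperstonian because $X^{\star}$ is Dedekind complete, and this is what makes the machinery of Remarks~\ref{r3}--\ref{r4} and Lemma~\ref{l1} available without any hypothesis on $X$.

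That said, your two displayed computations are the right content, and in fact they spell out what the paper compresses into ``it is easy to see from Lemma~\ref{l1}(3)''. Both survive once you transplant them to the paper's setting. For the inclusion $(X^{\star})^{\star}_n\subseteq p\cdot X^{\star\star}$: if $x^{\star\star}$ is order continuous on $X^{\star}$ and $a_{\gamma}\downarrow 0$ in $C(K)=Z(X^{\star})$, then $a_{\gamma}x^{\star}\downarrow 0$ in $X^{\star}$ for each $x^{\star}\geq 0$ (topological fullness of $Z(X^{\star})$ gives this), so $\mu_{x^{\star},x^{\star\star}}(a_{\gamma})\to 0$ and $\mu_{x^{\star},x^{\star\star}}\in C(K)_{\star}$. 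For the reverse inclusion your multiplier trick works abstractly: since $X^{\star}$ is Dedekind complete, for $0\le g_{\gamma}\le g_{0}$ one can produce $b_{\gamma}\in Z(X^{\star})$ with $0\le b_{\gamma}\le 1$ and $b_{\gamma}g_{0}=g_{\gamma}$ (restrict to the band generated by $g_{0}$, where $g_{0}$ is a weak unit and the ideal center acts transitively on $[0,g_{0}]$), and then $b_{\gamma}\downarrow 0$ and $\mu_{g_{0},x^{\star\star}}\in C(K)_{\star}$ finish the job exactly as you wrote. So the fix is not to abandon your approach but to run it on $X^{\star}$ (always Dedekind complete, center $C(K)$ hyperstonian, topologically full by Wickstead) rather than on $X$.
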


\begin{proof}
Let $X$ be a Banach lattice. Then $Z(X^{\star})$, the ideal center of
$X^{\star}$, is equal to $C(K)$ where $K$ is hyperstonian. Since $X^{\star}$
is Dedekind complete, $Z(X^{\star})$ is topologically full in the sense of
Wickstead~\cite{Wi}. That is for any $x^{\star}\in X_{+}^{\star}$, one has
that $cl(Z(X^{\star})x^{\star})$ is the closed ideal generated by $x^{\star}$.
Then, by considering $X^{\star}$ as a $C(K)$-module and applying the Arens
extension process described above we obtain (see~\cite[Corollary 1]{Or}) that
the ideal center $Z(X^{\star\star})$ of $X^{\star\star}$ is equal to a band in
$C(K)^{\star\star}$. It is easy to see from Lemma \ref{l1} part (3) that in
$X^{\star\star}$, the set $p\cdot X^{\star\star}$ is precisely the band of
order continuous linear functionals on $X^{\star}$. That is $p\cdot
X^{\star\star}=(X^{\star})_{n}^{\star}$.
\end{proof}

\begin{remark}
\label{r5} In the case of Riesz spaces, the statement of Lemma~\ref{l2}
remains true for the order bidual of the Riesz space. We refer the interested
reader to Corollary 6 in~\cite{AO}.
\end{remark}

Our next lemma shows that the property $p \cdot X^{\star\star} = X$ is a
three-space property.

\begin{lemma}
\label{l3} Suppose $K$ is hyperstonian and $X$ is a Banach $C(K)$-module such
that $\ \mathcal{B}$, the idempotents in $C(K)$, is a Bade complete Boolean
algebra of projections on $X$.

\begin{enumerate}
\item Let $Y$ be a closed submodule of $X$ such that $p\cdot Y^{\star\star}=Y$
and $p\cdot(X/Y)^{\star\star}=X/Y$. Then $p\cdot X^{\star\star}=X$.

\item Let $p\cdot X^{\star\star}=X$. Then for any closed submodule $Y$ of $X$
we have $p\cdot Y^{\star\star}=Y$ and $p\cdot(X/Y)^{\star\star}=X/Y$.
\end{enumerate}
\end{lemma}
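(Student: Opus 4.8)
The plan is to exploit the natural identifications provided by Lemma~\ref{l1} together with the fact that, under the standing hypotheses of Remark~\ref{r2}, the Arens action of $C(K)^{\star\star}$ on $X^{\star\star}$ restricts well to submodules and quotients. Throughout, I identify $X$ with its canonical image in $X^{\star\star}$ and I use that $Y$, being a \emph{closed submodule}, gives rise to a short exact sequence $0\to Y\to X\to X/Y\to 0$ of Banach $C(K)$-modules; passing to biduals and using that taking biduals is functorial on this sequence, I get $Y^{\star\star}$ realized as a (weak-$\star$ closed) submodule of $X^{\star\star}$ and $(X/Y)^{\star\star}=X^{\star\star}/Y^{\star\star}$ as $C(K)^{\star\star}$-modules, with the quotient map being the bidual of $X\to X/Y$. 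The key observation is that in all three biduals the idempotent $p\in C(K)^{\star\star}$ acts, and by Lemma~\ref{l1}(3) the range $p\cdot(\,\cdot\,)^{\star\star}$ is characterized \emph{intrinsically} by the condition that all the scalar measures $\mu_{x^{\star},x^{\star\star}}$ lie in the predual band $C(K)_{\star}$; this characterization is inherited by submodules and pushed along quotient maps.

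For part (2): assume $p\cdot X^{\star\star}=X$. Given $y^{\star\star}\in Y^{\star\star}\subseteq X^{\star\star}$, for every $x^{\star}\in X^{\star}$ the measure $\mu_{x^{\star},y^{\star\star}}$ (computed in $X^{\star\star}$) equals $\mu_{x^{\star}|_Y,\,y^{\star\star}}$ and lies in $C(K)_{\star}$ because $y^{\star\star}=p\cdot y^{\star\star}$ holds already in $X^{\star\star}$; but every functional on $Y$ extends to one on $X$, so by Lemma~\ref{l1}(3) applied \emph{inside} $Y^{\star\star}$ we get $y^{\star\star}\in p\cdot Y^{\star\star}$, i.e. $p\cdot Y^{\star\star}=Y^{\star\star}$, and since $p\cdot Y^{\star\star}\subseteq p\cdot X^{\star\star}=X$ and $Y^{\star\star}\cap X=Y$, this forces $p\cdot Y^{\star\star}=Y$. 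For the quotient, let $q\colon X^{\star\star}\to (X/Y)^{\star\star}$ be the (surjective, $C(K)^{\star\star}$-linear) bidual of the quotient map; since $p$ is central and $q$ intertwines the actions, $q$ maps $p\cdot X^{\star\star}=X$ onto $p\cdot(X/Y)^{\star\star}$, and the image of $X$ under $q$ is exactly $X/Y$. Hence $p\cdot(X/Y)^{\star\star}=X/Y$.

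For part (1): assume $p\cdot Y^{\star\star}=Y$ and $p\cdot(X/Y)^{\star\star}=X/Y$. Take $x^{\star\star}\in X^{\star\star}$ and consider $p\cdot x^{\star\star}$; its image under $q$ lies in $p\cdot(X/Y)^{\star\star}=X/Y$, so there is $x\in X$ with $q(p\cdot x^{\star\star})=q(x)$, i.e. $p\cdot x^{\star\star}-x\in \ker q=Y^{\star\star}$. Applying $p$ again (and using $p^2=p$, $p\cdot x=x$ since $x\in X=p\cdot X$ would be circular — instead use that $p\cdot x$ need not equal $x$, so write $z^{\star\star}:=p\cdot x^{\star\star}-p\cdot x\in p\cdot Y^{\star\star}=Y\subseteq X$). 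Then $p\cdot x^{\star\star}=p\cdot x+z^{\star\star}$ with $z^{\star\star}\in X$; it remains to see $p\cdot x\in X$, which follows because $x\in X$ and $p\cdot X\subseteq X$ is exactly what we are trying to establish — so instead I argue directly: for every $x^{\star}\in X^{\star}$, $\mu_{x^{\star},p\cdot x^{\star\star}}=p\mu_{x^{\star},x^{\star\star}}\in C(K)_{\star}$, and $\mu_{x^{\star},p\cdot x^{\star\star}-z^{\star\star}}$ vanishes on $Y$ hence descends to a functional on $X/Y$; that functional, paired against $q(p\cdot x^{\star\star}-z^{\star\star})\in X/Y$, shows $p\cdot x^{\star\star}-z^{\star\star}\in X$, whence $p\cdot x^{\star\star}\in X$. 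Since this holds for all $x^{\star\star}$, $p\cdot X^{\star\star}=X$.

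\textbf{Main obstacle.} The delicate point is verifying that passing to the bidual of the short exact sequence $0\to Y\to X\to X/Y\to 0$ is compatible with the Arens $C(K)^{\star\star}$-actions — i.e. that the canonical isometric identification $Y^{\star\star}\cong \ker q$ and $(X/Y)^{\star\star}\cong X^{\star\star}/Y^{\star\star}$ is also a $C(K)^{\star\star}$-module isomorphism, and that the idempotent $p$ is the \emph{same} element of $C(K)^{\star\star}$ in all three pictures. Once this bookkeeping is pinned down (it reduces to the associativity and naturality recorded in Lemma~\ref{l1}(1)--(2) plus the definition of the Arens action on restrictions of functionals), the argument is the diagram chase sketched above. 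I expect a few lines will be needed to make the ``descends to a functional on $X/Y$'' step in part (1) precise, using that $(X/Y)^{\star}$ is exactly the annihilator $Y^{\perp}\subseteq X^{\star}$.
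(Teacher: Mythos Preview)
Your argument for part~(2) is essentially the paper's: identify $Y^{\star\star}$ with $Y^{oo}\subset X^{\star\star}$, use $p\cdot y^{\star\star}\in X\cap Y^{oo}=Y$, and for the quotient push forward along the bidual of the quotient map. The detour through Lemma~\ref{l1}(3) is unnecessary but harmless.

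Part~(1), however, has a real gap, and it stems from a misconception. You correctly reach $p\cdot x^{\star\star}-x\in Y^{\star\star}$ for some $x\in X$, and after applying $p$ you get $p\cdot x^{\star\star}-p\cdot x\in p\cdot Y^{\star\star}=Y$. At this point you need $p\cdot x=x$, and you flag this as ``circular'' because it looks like an instance of $p\cdot X\subseteq X$. It is not circular: the inclusion $X\subseteq p\cdot X^{\star\star}$ (equivalently, $p\cdot x=x$ for every $x\in X$) holds \emph{automatically} under the standing hypotheses, independent of the hypotheses on $Y$ and $X/Y$. Indeed, by Lemma~\ref{l1}(3) it suffices to see that $\mu_{x^{\star},x}(a)=x^{\star}(ax)$ is a normal functional on $C(K)$ for each $x^{\star}$; this is exactly what Bade completeness of $\mathcal{B}$ gives (if $e_{\alpha}\uparrow e$ in $\mathcal{B}$ then $e_{\alpha}x\to ex$ in norm, and the extension to arbitrary bounded increasing nets in $C(K)$ is standard). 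The paper simply says ``$p\cdot x=x$ trivially'' and then concludes $p\cdot x^{\star\star}=x-y\in X$; with this one observation your own chain of equalities already finishes the proof.

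Your attempted workaround (``argue directly'') does not make sense as written: $\mu_{x^{\star},\,p\cdot x^{\star\star}-z^{\star\star}}$ is an element of $C(K)^{\star}$, a functional of $a\in C(K)$, so the phrase ``vanishes on $Y$ hence descends to a functional on $X/Y$'' has no meaning. What \emph{would} work is to restrict attention to $x^{\star}\in Y^{o}=(X/Y)^{\star}$, but that path just reproduces the quotient argument you already did and still needs $p\cdot x=x$ to close. Drop the workaround and invoke the easy inclusion $X\subseteq p\cdot X^{\star\star}$; then your proof of part~(1) coincides with the paper's.
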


\begin{proof}
(1) Let $Y$ be a closed submodule of $X$. Then $\ \mathcal{B}$ is a Bade
complete Boolean algebra of projections on both $Y$ and $X/Y$
(see~\cite[Lemma 1]{KO}). Hence it makes sense to consider the hypothesis in
the statement of part (1) of the lemma and assume that it holds for these two
spaces. It is familiar from standard duality that $Y^{\star\star}%
=Y^{oo}\subset X^{\star\star}$ and $(X/Y)^{\star\star}=X^{\star\star}/Y^{oo}$
where for a subspace $Y\subset X$, we let $Y^{o}$ denote the polar (or the
annihilator) of $Y$ in $X^{\star}$ and let $Y^{oo}$ denote the polar of
$Y^{o}$ in $X^{\star\star}$. Consider $x^{\star\star}\in X^{\star\star}$. Let
$[x^{\star\star}]=x^{\star\star}+Y^{oo}\in X^{\star\star}/Y^{oo}$. Then
$p\cdot(X/Y)^{\star\star}=X/Y$ means that, as a subset of $X^{\star\star},$
$p\cdot\lbrack x^{\star\star}]=[p\cdot x^{\star\star}]=p\cdot x^{\star\star
}+Y^{oo}$ has non-empty intersection with $X$. That is, there is $x\in X$ and
$y^{\star\star}\in Y^{oo}$ such that $p\cdot x^{\star\star}+y^{\star\star}=x$
(*). Now $p\cdot Y^{oo}=Y$ means that $p\cdot y^{\star\star}=y$ for some $y\in
Y$ and we also have that $p\cdot x=x$ trivially. Hence we apply $p$ to both
sides of the equality (*) to obtain $p\cdot x^{\star\star}=x-y$ for some $y\in
Y$ and $x\in X$.

(2) Assume $p\cdot X^{\star\star}=X$ and $Y$ is a closed submodule of $X$. It
is familiar that $Y=X\cap Y^{oo}$ by the bipolar theorem. Let $y^{\star\star
}\in Y^{oo}$, then $p\cdot y^{\star\star}\in X\cap Y^{oo}=Y$. Hence $p\cdot
Y^{\star\star}=Y$. Now take $x^{\star\star}\in X^{\star\star}$ and consider
$[x^{\star\star}]\in X^{\star\star}/Y^{oo}$. Since $p\cdot x^{\star\star}=x$
for some $x\in X$, we have $p\cdot\lbrack x^{\star\star}]=[p\cdot
x^{\star\star}]=[x]$. Also, given $\varepsilon>0$, there is $y^{\star\star}\in
Y^{oo}$ such that $||p\cdot x^{\star\star}+y^{\star\star}||<||p\cdot\lbrack
x^{\star\star}]||(1-\varepsilon)$. Since $p\cdot y^{\star\star}=y\in Y$ for
some $y\in Y$, we have $p\cdot(p\cdot x^{\star\star}+y^{\star\star})=x+y$.
Since $||p||=1$, we have, when $p\cdot x^{\star\star}=x$, $||p\cdot\lbrack
x^{\star\star}]||_{(X/Y)^{\star\star}}=||[x]||_{X/Y}$. Hence $p\cdot
(X/Y)^{\star\star}=X/Y$.
\end{proof}

The next lemma is a special case of Theorem 6 in~\cite[Page 297]{KA}.

\begin{lemma}
\label{l4} Let $X$ be a Banach lattice and let $\{x_{n}^{\star}\}$ be a
sequence of positive, increasing and norm bounded elements of $X^{\star}$.
Suppose $M>0$ is the supremum of the norms of the functionals in the sequence.
Then the sequence has a least upper bound $x^{\star}\in X_{+}^{\star}$ with
norm equal $M$.
\end{lemma}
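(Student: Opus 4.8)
The plan is to exhibit the least upper bound directly as a pointwise limit of the given functionals. I will not invoke Dedekind completeness of $X^{\star}$, since the sequence is only assumed norm bounded rather than order bounded; that norm boundedness already suffices is precisely the content of the lemma.

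First I would fix $x\in X_{+}$. Since $\{x_{n}^{\star}\}$ is increasing in the order of $X^{\star}$, the scalars $x_{n}^{\star}(x)$ form a nondecreasing sequence, and it is bounded above by $M\|x\|$ because $\|x_{n}^{\star}\|\leq M$ for every $n$. Hence $\varphi(x):=\lim_{n}x_{n}^{\star}(x)=\sup_{n}x_{n}^{\star}(x)$ exists for each $x\in X_{+}$. Writing an arbitrary element of $X$ as $u-v$ with $u,v\in X_{+}$, linearity shows that $\lim_{n}x_{n}^{\star}(x)$ exists for every $x\in X$ and defines a linear functional $x^{\star}$; from $|x_{n}^{\star}(x)|\leq M\|x\|$ for all $n$ we obtain $|x^{\star}(x)|\leq M\|x\|$, so $x^{\star}\in X^{\star}$ with $\|x^{\star}\|\leq M$, and $x^{\star}(x)=\sup_{n}x_{n}^{\star}(x)\geq 0$ on $X_{+}$ shows $x^{\star}\in X_{+}^{\star}$.

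Next I would check that $x^{\star}$ is the least upper bound of the sequence in $X^{\star}$. Recall that in the dual Banach lattice $X^{\star}$ one has $f\geq g$ precisely when $f(x)\geq g(x)$ for all $x\in X_{+}$. Since $x^{\star}(x)=\sup_{k}x_{k}^{\star}(x)\geq x_{n}^{\star}(x)$ for all $n$ and all $x\in X_{+}$, it follows that $x^{\star}\geq x_{n}^{\star}$ for every $n$. If $y^{\star}\in X^{\star}$ is any upper bound of the sequence, then $y^{\star}(x)\geq x_{n}^{\star}(x)$ for all $n$ and all $x\in X_{+}$, hence $y^{\star}(x)\geq\sup_{n}x_{n}^{\star}(x)=x^{\star}(x)$; thus $y^{\star}\geq x^{\star}$. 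Therefore $x^{\star}=\bigvee_{n}x_{n}^{\star}$.

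Finally, for the norm: since $0\leq x_{n}^{\star}\leq x^{\star}$ and the norm of $X^{\star}$ is a lattice norm (monotone on the positive cone), $\|x_{n}^{\star}\|\leq\|x^{\star}\|$ for every $n$, so $\|x^{\star}\|\geq\sup_{n}\|x_{n}^{\star}\|=M$; together with $\|x^{\star}\|\leq M$ from the second step this gives $\|x^{\star}\|=M$. I do not anticipate any real obstacle here; the only subtlety is the one noted at the outset, namely that we must construct $x^{\star}$ by hand instead of appealing to order boundedness. An essentially equivalent alternative would be to take a $\sigma(X^{\star},X)$-cluster point of $\{x_{n}^{\star}\}$ in the $\sigma(X^{\star},X)$-compact $M$-ball of $X^{\star}$, identify it on $X_{+}$ with $\sup_{n}x_{n}^{\star}(\cdot)$ using monotonicity of the sequence, and use weak-$\star$ lower semicontinuity of the norm for the estimate $\|x^{\star}\|\leq M$.
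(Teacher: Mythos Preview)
Your proof is correct. The paper does not actually supply a proof of this lemma: it simply records it as a special case of Theorem~6 in Kantorovich--Akilov, \emph{Functional Analysis}, p.~297, and moves on. Your argument is the standard direct construction of the supremum as the pointwise limit $x^{\star}(x)=\lim_{n}x_{n}^{\star}(x)$, which is precisely how one proves the cited result in the first place; so there is no real methodological difference to discuss, only that you have written out what the paper leaves to a reference. The only cosmetic remark is that the paper allows complex scalars as well, so strictly speaking the decomposition $x=u-v$ with $u,v\in X_{+}$ should be supplemented by splitting into real and imaginary parts in the complex case; this is routine and does not affect the argument.
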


We will need the following complement to Lozanovsky's result that follows from
Theorem 2.4.12 in~\cite{MN}.

\begin{theorem}
\label{t00} Let $X$ be a Banach lattice. The following conditions are equivalent.

(1) $X$ is weakly sequentially complete.

(2) $X$ does not contain a sublattice that is lattice isomorphic to $c_{0}$.
\end{theorem}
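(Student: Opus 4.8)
The plan is to prove the two implications separately. The implication $(1)\Rightarrow(2)$ is a short duality argument, while $(2)\Rightarrow(1)$ carries the real content and amounts to Lozanovsky's theorem sharpened from ``contains a copy of $c_{0}$ as a subspace'' to ``contains a copy of $c_{0}$ as a sublattice''.

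For $(1)\Rightarrow(2)$ I would argue by contradiction. Suppose $F\subseteq X$ is a closed sublattice and $T\colon c_{0}\to F$ is a lattice isomorphism; let $e_{n}$ denote the canonical unit vectors of $c_{0}$ and put $s_{N}=T\bigl(\sum_{n\le N}e_{n}\bigr)\in X$. For every $x^{\star}\in X^{\star}$ the restriction $x^{\star}|_{F}$ belongs to $F^{\star}$, so $T^{\star}(x^{\star}|_{F})\in c_{0}^{\star}=l^{1}$, and hence $\langle x^{\star},s_{N}\rangle=\sum_{n\le N}\bigl(T^{\star}(x^{\star}|_{F})\bigr)_{n}$ is the partial sum of an absolutely convergent series. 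Thus $(s_{N})$ is weakly Cauchy in $X$. If $X$ is weakly sequentially complete, $(s_{N})$ converges weakly to some $x\in X$; since $F$ is norm-closed and convex it is weakly closed, so $x\in F$, and therefore $\sum_{n\le N}e_{n}=T^{-1}s_{N}$ converges weakly in $c_{0}$. This is impossible: a weak limit of $\sum_{n\le N}e_{n}$ would have every coordinate equal to $1$ (the coordinate functionals lie in $c_{0}^{\star}$) and so could not belong to $c_{0}$. This contradiction proves $(1)\Rightarrow(2)$.

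For $(2)\Rightarrow(1)$ I would prove the contrapositive, in two steps. \emph{Reduction to an increasing sequence.} I would invoke the classical fact that a $KB$-space is weakly sequentially complete; contrapositively, if $X$ is not weakly sequentially complete then $X$ is not a $KB$-space, so there is an increasing, norm-bounded sequence $(u_{n})$ in $X_{+}$ that does not converge in norm. (This is consistent with the failure of weak sequential completeness: such a sequence is automatically weakly Cauchy --- split each $x^{\star}$ into its parts in the Dedekind complete lattice $X^{\star}$ and use monotone boundedness of the resulting scalar sequences --- while an increasing, weakly convergent sequence in any Banach lattice is norm convergent, since its weak limit $u$ satisfies $u\ge u_{n}$ for all $n$ and lies in the norm closure of $\operatorname{conv}\{u_{n}\}$, so a convex combination $v=\sum_{i\le N}\lambda_{i}u_{i}$ close to $u$ gives $0\le u-u_{N}\le u-v$ and hence $\|u-u_{N}\|\le\|u-v\|$.)

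\emph{Disjointification and conclusion.} Passing to a subsequence I would arrange $\|u_{n_{k+1}}-u_{n_{k}}\|\ge\varepsilon>0$ and set $w_{k}=u_{n_{k+1}}-u_{n_{k}}\in X_{+}$, so that the partial sums $\sum_{k\le m}w_{k}=u_{n_{m+1}}-u_{n_{1}}$ are increasing and bounded by some $M$. The main obstacle is now to replace the merely almost-disjoint sequence $(w_{k})$ by an honestly disjoint positive sequence $(z_{k})$ with $c:=\inf_{k}\|z_{k}\|>0$ and $\sup_{m}\bigl\|\sum_{k\le m}z_{k}\bigr\|<\infty$: this is a gliding-hump/perturbation argument in which $z_{k}$ is extracted from $w_{k}$ as a piece disjoint from a suitable finite sum of the earlier terms, the correction terms being kept small, and where one may pass to the Dedekind complete bidual $X^{\star\star}$ to perform the required band projections and then project the outcome back into $X$. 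Once $(z_{k})$ is in hand the conclusion is routine: by disjointness $\bigl|\sum a_{k}z_{k}\bigr|=\sum|a_{k}|z_{k}$, whence $c\,\max_{k}|a_{k}|\le\bigl\|\sum a_{k}z_{k}\bigr\|\le M\max_{k}|a_{k}|$; thus $e_{k}\mapsto z_{k}$ extends to an isomorphism of $c_{0}$ onto $\overline{\operatorname{span}}\,\{z_{k}\}$, and this two-sided estimate also forces the coefficients of every element of $\overline{\operatorname{span}}\,\{z_{k}\}$ to tend to $0$, from which $\overline{\operatorname{span}}\,\{z_{k}\}$ is seen to be closed under the modulus and hence to be a closed sublattice of $X$. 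This exhibits a sublattice of $X$ that is lattice isomorphic to $c_{0}$, contrary to $(2)$.
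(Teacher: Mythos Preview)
The paper does not supply its own proof of this statement; it merely records that the result follows from Theorem~2.4.12 in Meyer--Nieberg~\cite{MN}. Your outline is precisely the classical argument found there, so on the level of strategy there is nothing to contrast.

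Your implication $(1)\Rightarrow(2)$ is correct, as is the final step of $(2)\Rightarrow(1)$: a disjoint positive sequence $(z_k)$ with $\inf_k\|z_k\|>0$ and $\sup_m\bigl\|\sum_{k\le m}z_k\bigr\|<\infty$ spans a sublattice lattice-isomorphic to $c_0$; this is exactly \cite[Lemma~2.3.10]{MN}, which the paper itself invokes elsewhere. The reduction ``not weakly sequentially complete $\Rightarrow$ not a $KB$-space $\Rightarrow$ an increasing bounded non-convergent positive sequence $\Rightarrow$ differences $w_k$ with $\|w_k\|\ge\varepsilon$ and bounded partial sums'' is also fine.

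The one genuine gap is the disjointification, and your description of it is not quite right. The suggestion to ``pass to the Dedekind complete bidual $X^{\star\star}$ to perform the required band projections and then project the outcome back into $X$'' does not work as stated: $X$ is in general only a closed sublattice of $X^{\star\star}$, not an ideal, so $0\le z\le w_k$ with $w_k\in X$ and $z\in X^{\star\star}$ does not force $z\in X$, and there is no projection of $X^{\star\star}$ onto $X$ to invoke. The standard way around this is a case split. If $X$ fails to have order continuous norm, then by the well-known characterisation there is already an order-bounded disjoint sequence in $X_+$ with norms bounded away from~$0$; since the partial sums of a disjoint positive sequence equal the suprema, they are dominated by the common order bound, and \cite[Lemma~2.3.10]{MN} finishes. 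If $X$ does have order continuous norm, then $X$ \emph{is} an ideal in $X^{\star\star}$, and now your bidual manoeuvre becomes legitimate: one disjointifies the $w_k$ below their supremum in $X^{\star\star}$, and the resulting elements, being trapped between $0$ and $w_k\in X$, lie in $X$. With this repair your argument coincides with the textbook proof in~\cite{MN}.
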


The following lemma will play a crucial role in the proof of Theorem~\ref{t1}.

\begin{lemma}
\label{l5} Suppose $K$ is hyperstonian and $X$ is a finitely generated Banach
$C(K)$-module such that the algebra $\ \mathcal{B}$ of the idempotents in
$C(K)$, is a Bade complete Boolean algebra of projections on $X$. If no cyclic
subspace of $X$ contains a copy of $c_{0},$ then $p\cdot X^{\star\star}=X.$
\end{lemma}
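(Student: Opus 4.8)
The plan is to reduce the statement to the cyclic case by filtering $X$ by closed submodules with cyclic quotients, to run an induction on the three–space property (Lemma~\ref{l3}), and to settle each cyclic quotient by Lozanovsky's theorem (Theorem~\ref{t00}) together with the identification of $p\cdot(\cdot)^{\star\star}$ in Lemma~\ref{l2}. One inclusion, $X\subseteq p\cdot X^{\star\star}$, uses only Bade completeness: for $x\in X$ and $x^{\star}\in X^{\star}$ the map $a\mapsto x^{\star}(ax)$ on $C(K)$ is order continuous, since if $a_{\alpha}\downarrow 0$ then, letting $e_{\alpha}\in\mathcal{B}$ be the clopen set on which $a_{\alpha}\geq\varepsilon$ (so $\varepsilon e_{\alpha}\leq a_{\alpha}$, whence $e_{\alpha}\downarrow 0$), condition (2) of Definition~\ref{d5} applied to $1-e_{\alpha}\uparrow 1$ gives $e_{\alpha}x\to0$ and therefore $a_{\alpha}x\to0$ in norm; hence $\mu_{x^{\star},x}\in C(K)_{\star}$ and $x\in p\cdot X^{\star\star}$ by Lemma~\ref{l1}(3). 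So everything rests on the reverse inclusion.

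Fix generators $x_{1},\dots,x_{n}$ of $X$, put $X_{0}=\{0\}$ and $X_{j}=cl\bigl(X(x_{1})+\dots+X(x_{j})\bigr)$, so that $X_{0}\subseteq\dots\subseteq X_{n}=X$ is a chain of closed submodules in which each $X_{j}/X_{j-1}$ is a cyclic $C(K)$–module (the image of $X(x_{j})$ is dense in it). By \cite[Lemma~1]{KO}, $\mathcal{B}$ remains a Bade complete Boolean algebra of projections on every $X_{j}$ and every $X_{j}/X_{j-1}$, so Lemma~\ref{l3} applies along the chain; by Proposition~\ref{p1} and Remark~\ref{r6} each $X_{j}/X_{j-1}$ is a Banach lattice with order continuous norm on which $\mathcal{B}$ is the band algebra, and by Proposition~\ref{p2} the ambient $C(K)$ induces on it the same projection $p$ as in Lemma~\ref{l2}. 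I would then prove $p\cdot X_{j}^{\star\star}=X_{j}$ by induction on $j$: the step is immediate from Lemma~\ref{l3}(1) once $p\cdot X_{j-1}^{\star\star}=X_{j-1}$ (inductive hypothesis) and $p\cdot(X_{j}/X_{j-1})^{\star\star}=X_{j}/X_{j-1}$ are known. For the quotient term, if $X_{j}/X_{j-1}$ contains no copy of $c_{0}$, then Theorem~\ref{t00} makes it a $KB$–space; a $KB$–space $E$ satisfies $E=(E^{\star})^{\star}_{n}$, so Lemma~\ref{l2} gives $p\cdot(X_{j}/X_{j-1})^{\star\star}=X_{j}/X_{j-1}$, closing the induction.

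Thus the whole matter reduces to the claim that \emph{no cyclic quotient $X_{j}/X_{j-1}$ contains a copy of $c_{0}$}, which I expect to be the main obstacle. The strategy is to push a hypothetical copy of $c_{0}$ into a single cyclic subspace of $X$. If $X_{j}/X_{j-1}$ contains $c_{0}$, then, being a Banach lattice, it contains a sublattice lattice isomorphic to $c_{0}$ spanned by pairwise disjoint positive $(\bar e_{k})$ with $\|\bar e_{k}\|=1$ and $\sup_{G}\|\sum_{k\in G}\bar e_{k}\|\leq M<\infty$. Choose $\chi_{k}\in\mathcal{B}$ acting on $X_{j}/X_{j-1}$ as the band projection onto the band generated by $\bar e_{k}$ (possible by Remark~\ref{r6}); replacing $\chi_{k}$ by $\chi_{k}\wedge(1-\chi_{1})\wedge\dots\wedge(1-\chi_{k-1})$ makes the $\chi_{k}$ pairwise disjoint in $C(K)$ without altering their action on $\bar e_{k}$. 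Lift $\bar e_{k}$ to $\hat e_{k}\in X_{j}$ with $\|\hat e_{k}\|\leq2$, set $e_{k}=\chi_{k}\hat e_{k}$ and $x=\sum_{k}2^{-k}e_{k}\in X_{j}$. Since $\chi_{k}\chi_{l}=0$ for $k\neq l$ one gets $\chi_{k}x=2^{-k}e_{k}$, so $e_{k}=2^{k}\chi_{k}x\in C(K)x$; hence all $e_{k}$ lie in the cyclic subspace $X(x)$, which is a cyclic subspace of $X$ and therefore a $KB$–space by hypothesis. Inside $X(x)$ the $e_{k}$ are pairwise disjoint (they lie in the disjoint bands $\chi_{k}X(x)$), uniformly norm–bounded, and $\|\sum_{k\in G}e_{k}\|\geq\|q(\sum_{k\in G}e_{k})\|=\|\sum_{k\in G}\bar e_{k}\|\geq c>0$, where $q\colon X_{j}\to X_{j}/X_{j-1}$ is the quotient map.

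The delicate part is to convert this lower estimate into a uniform \emph{upper} bound $\sup_{G}\|\sum_{k\in G}e_{k}\|<\infty$ in $X(x)$, since then $(e_{k})$ would span a copy of $c_{0}$ in the $KB$–space $X(x)$, a contradiction that closes the induction and yields $p\cdot X^{\star\star}=X$. Establishing this bound is, I expect, the main technical difficulty, requiring a careful choice of the lifts $\hat e_{k}$ and, decisively, Lemma~\ref{l4}: transporting the positive biorthogonal functionals of the $c_{0}$–basis $(\bar e_{k})$ back into $X(x)^{\star}$ through $q^{\star}$ and the inclusion $X(x)\hookrightarrow X_{j}$ produces an increasing sequence of positive functionals in $X(x)^{\star}$, and the argument hinges on showing that the failure of the desired bound would make this sequence norm–unbounded in a way incompatible, after dualizing back, with the $c_{0}$–bound $M$ on the $\bar e_{k}$.
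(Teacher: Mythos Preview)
Your overall architecture---induction on the number of generators, filtering by submodules with cyclic quotients, the three-space Lemma~\ref{l3}, and the $KB$-space identification via Lemma~\ref{l2}---is exactly the paper's. The gap is precisely where you flag it: the uniform upper bound $\sup_{N}\bigl\|\sum_{k\le N}e_{k}\bigr\|<\infty$ inside $X(x)$. Your proposed route through Lemma~\ref{l4} and biorthogonal functionals does not work. The partial sums $F_{N}=f_{1}+\dots+f_{N}$ of the positive biorthogonals of a $c_{0}$-basis satisfy $F_{N}(\bar e_{1}+\dots+\bar e_{N})=N$ while $\|\bar e_{1}+\dots+\bar e_{N}\|\le M$, so $\|F_{N}\|\ge N/M\to\infty$; the sequence you want to feed into Lemma~\ref{l4} is already norm-unbounded, and no information flows back to bound $\sum e_{k}$. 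More fundamentally, your lifts $\hat e_{k}$ are chosen one at a time with no mutual constraint, so there is simply no mechanism forcing $\sum_{k\le N}\chi_{k}\hat e_{k}$ to stay bounded; a ``careful choice'' cannot fix this without an additional idea.

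The paper resolves this by reversing the order of operations: rather than lifting each $\bar e_{k}$ and then seeking a bound, it first \emph{builds the bound into a single element} of $X^{\star\star}$ and only afterwards shows its pieces lie in $X$. Concretely, with $Y=X_{j-1}$ and $[u_{k}]=\bar e_{k}$, set $[z_{N}]=\sum_{k\le N}[u_{k}]$; this is positive, increasing, and norm-bounded in $X/Y$, so Lemma~\ref{l4} (applied in $(X/Y)^{\star\star}$, not in $X(x)^{\star}$) yields $[z]=\sup_{N}[z_{N}]\in((X/Y)^{\star})^{\star}_{n}$. Pick $z\in X^{\star\star}$ over $[z]$ with $p\cdot z=z$. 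Since $\chi_{N}\cdot[z]=[z_{N}]$, one has $\chi_{N}\cdot z-z_{N}\in Y^{oo}$, and \emph{here} the induction hypothesis $p\cdot Y^{\star\star}=Y$ is used: applying $p$ forces $\chi_{N}\cdot z\in X$, hence each $e_{k}\cdot z\in X$. Now the upper bound is automatic, $\bigl\|\sum_{k\le N}e_{k}\cdot z\bigr\|=\|\chi_{N}\cdot z\|\le\|z\|$, while the lower bound $\|e_{k}\cdot z\|\ge\|[u_{k}]\|\ge d$ comes from the quotient map. Setting $w=\sum 2^{-k}e_{k}\cdot z$ then plants a lattice copy of $c_{0}$ inside $X(w)$, giving the contradiction. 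Note that in your write-up the induction hypothesis is invoked only through Lemma~\ref{l3}(1); in the paper it does double duty, being essential also inside the $c_{0}$-lifting step to pull $\chi_{N}\cdot z$ down from $X^{\star\star}$ to $X$.
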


\begin{proof}
We will prove the result by induction on the number of generators of $X.$
Suppose $X$ is cyclic. Then by Remark~\ref{r6}, $X$ can be represented as a
Banach lattice with order continuous norm such that $\ \mathcal{B}$
corresponds to the band projections on $X.$ Since $X$ is cyclic $X$ does not
contain a copy of $c_{0}.$ Then, by the well known result for Banach lattices
(see e.g.~\cite[Theorem 2.4.12]{MN}), $X$ is a KB-space and $X=(X^{\star}%
)_{n}^{\star}.$ But by Lemma~\ref{l2} we have that $p\cdot X^{\star\star
}=(X^{\star})_{n}^{\star}$ for Banach lattices. Therefore $p\cdot
X^{\star\star}=X$ as required. Now suppose whenever $X$ has $r$ generators (
$r\geq1$ ) and no cyclic subspace of $X$ contains a copy of $c_{0},$ we have
that $p\cdot X^{\star\star}=X$. Suppose that $X$ has $r+1$ generators. Let
$\{x_{0},x_{1},\cdots,x_{r}\}$ be a set of generators for $X.$ Let
$Y=X(x_{1},x_{2},\cdots,x_{r}).$ Then, since no cyclic subspace of $Y$
contains a copy of $c_{0}$, we have that $p\cdot Y^{\star\star}=Y$ or $p\cdot
Y^{oo}=Y$ since $Y^{\star\star}=Y^{oo}\subset X^{\star\star}.$ Now consider
$X/Y=X/Y([x_{0}]).$ Since $\ \mathcal{B}$ is Bade complete on $X$, it is also
Bade complete on $X/Y$ (~\cite[Lemma1]{KO}). Also since $X/Y$ is cyclic it may
be represented as a Banach lattice which, by means of the previous sentence,
may be assumed to have an order continuous norm and such that $\ \mathcal{B}$
corresponds to the band projections of the Banach lattice. If $X/Y$ contains
no copy of $c_{0}$, then, as in the case when $X$ is cyclic, we may claim that
$p\cdot(X/Y)^{\star\star}=X/Y.$ So if this is not the case, $X/Y$ must contain
a copy of $c_{0}.$ In fact as remarked before since $X/Y$ is a Banach lattice
it must contain a copy of $c_{0}$ that is lattice isomorphic to a closed
sublattice of $X/Y.$ This is equivalent to the statement that there exists a
sequence $\{u_{n}\}$ in $X$ such that $\{[u_{n}]\}$ is a positive disjoint
sequence in $X/Y$ and there exists constants $0<d<D$ with
\[
d\leq||[u_{n}]||\,\text{ and \thinspace}||[u_{1}]+[u_{2}]+\cdots+[u_{n}]||\leq
D
\]
for each $n$ (see~\cite[Lemma 2.3.10]{MN}). Let $\{e_{n}\}$ be a sequence in
$\ \mathcal{B}$ such that each $e_{n}$ is the band projection onto the band
generated by $[u_{n}]$ in $X/Y.$ Since the sequence $\{[u_{n}]\}$ is disjoint
in the Banach lattice $X/Y,$ the members of the sequence of band projections
$\{e_{n}\}$ are also pairwise disjoint as idempotents in $C(K).$ Since
$e_{n}[u_{n}]=[e_{n}u_{n}]=[u_{n}]$ for each $n,$ we may assume without loss
of generality that $e_{n}u_{n}=u_{n}$. Let $z_{n}=u_{1}+u_{2}+\cdots+u_{n}\in
X$ and $\chi_{n}=e_{1}+e_{2}+\cdots+e_{n}\in\ \mathcal{B}$ for each $n.$ Then
$\{[z_{n}]\}$ is a positive, increasing and norm bounded sequence in $X/Y.$ By
Lemma~\ref{l4}, there exists $z\in X^{\star\star}$ such that $[z]=\underset
{n}{\sup}[z_{n}]$. Note that we consider $X/Y\subset((X/Y)^{\star})_{n}%
^{\star}$. Since $((X/Y)^{\star})_{n}^{\star}$ is a band in $(X/Y)^{\star
\star},$ we have that $[z]\in((X/Y)^{\star})_{n}^{\star}.$ Then by
Lemma~\ref{l2}, we have that $p\cdot\lbrack z]=[z].$ Therefore without loss of
generality we take $p\cdot z=z.$ The definition of $\{z_{n}\}$ implies that
$\chi_{n}z_{m}=z_{n}$ for all $n\leq m.$ Therefore $\chi_{n}\cdot\lbrack
z]=[z_{n}]$ for all $n.$ This means that there exists $y_{n}^{\star\star}\in
Y^{oo}$ such that $\chi_{n}\cdot z+y_{n}^{\star\star}=z_{n}$ for each $n.$ By
induction hypothesis, we have $p\cdot y_{n}^{\star\star}=y_{n}$ for some
$y_{n}\in Y$ and since $z_{n}\in X,$ we have $p\cdot z_{n}=z_{n}$ for each
$n$. Therefore if we apply the projection $p$ to both sides of the above
equality, we obtain $\chi_{n}\cdot z+y_{n}=z_{n}$ for each $n.$ Now apply
$e_{n}$ to the last equality to obtain
\[
e_{n}\cdot z=u_{n}-e_{n}y_{n}\in X
\]
for each $n.$ Note that the sequence $\{e_{n}\cdot z\}$ in $X$ satisfies%
\[
\delta\leq||[u_{n}]||\leq||e_{n}\cdot z||\text{\thinspace\ and \thinspace
}||e_{1}\cdot z+e_{2}\cdot z+\cdots+e_{n}\cdot z||=||\chi_{n}\cdot
z||\leq||z||
\]
for each $n.$ Let $w=\sum\frac{1}{2^{n}}e_{n}\cdot z$ in $X$ and consider the
cyclic subspace $X(w).$ Clearly we may represent $X(w)$ as a Banach lattice
with order continuous norm and positive quasi-interior point $w$. In virtue of
Lemma 1 in~\cite{KO} we may assume that $\mathcal{B}$ corresponds to the
algebra of band projections on the Banach lattice $X(w)$. Then $e_{n}%
w=\frac{1}{2^{n}}e_{n}\cdot z$ for each $n.$ Hence $\{e_{n}\cdot z\}$ is a
disjoint positive sequence in the Banach lattice $X(w).$ The displayed
conditions above indicates that the sublattice generated by $\{e_{n}\cdot z\}$
in $X(w)$ is lattice isomorphic to $c_{0}$ ~\cite[Lemma 2.3.10]{MN}. But this
is a contradiction, because we assumed that no cyclic subspace of $X$ contains
a copy of $c_{0}.$ Thus we see that $p\cdot(X/Y)^{\star\star}=X/Y$ as well as
$p\cdot Y^{\star\star}=Y.$ Finally Lemma~\ref{l3} part $(1)$ implies that
$p\cdot X^{\star\star}=X$ as required.
\end{proof}

\section{The main results.}

\subsection{Finitely generated $C(K)$-modules.}

\begin{theorem}
\label{t1} Let $X$ be a finitely generated Banach $C(K)$-module. Then the following
conditions are equivalent.

\begin{enumerate}
\item $X$ is weakly sequentially complete.

\item $X$ does not contain a copy of $c_{0}.$

\item No cyclic subspace of $X$ contains a copy of $c_{0}.$

\item Each cyclic subspace of $X$ is weakly sequentially complete.
\end{enumerate}
\end{theorem}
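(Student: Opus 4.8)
The plan is to prove the chain of implications $(1)\Rightarrow(2)\Rightarrow(3)\Rightarrow(4)\Rightarrow(1)$, using the machinery of Section~2 to handle the two nontrivial steps, namely $(3)\Rightarrow(4)$ and the return to $(1)$. The implication $(1)\Rightarrow(2)$ is classical: a Banach space containing a copy of $c_0$ is not weakly sequentially complete, since $c_0$ itself is not (the summing basis gives a weakly Cauchy sequence with no weak limit) and weak sequential completeness passes to closed subspaces. The implication $(2)\Rightarrow(3)$ is immediate because every cyclic subspace is a closed subspace of $X$.

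For $(3)\Rightarrow(4)$, fix $x\in X$ and consider the cyclic subspace $X(x)$. By Proposition~\ref{p1}, $X(x)$ is a Banach lattice with a quasi-interior point, and by hypothesis it contains no copy of $c_0$. Lozanovsky's theorem (equivalently Theorem~\ref{t00}, or~\cite[Theorem 2.4.12]{MN}) then says $X(x)$ is weakly sequentially complete. This step is genuinely easy once Proposition~\ref{p1} is in hand; the whole point of representing cyclic subspaces as Banach lattices is exactly to import Lozanovsky's result verbatim.

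The substantive step is $(4)\Rightarrow(1)$, and here the plan is to reduce to Lemma~\ref{l5}. First, note $(4)$ trivially implies $(3)$ (a cyclic subspace containing $c_0$ cannot be weakly sequentially complete, as in $(1)\Rightarrow(2)$), so we may assume $X$ satisfies condition $(3)$. We are not yet in the hyperstonian setting, so I would invoke Theorem~\ref{t0}: since no cyclic subspace of $X$ contains a copy of $c_0$, the weak operator closure of $m(C(K))$ in $L(X)$ is isometrically isomorphic to $C(Q)$ with $Q$ hyperstonian, and the idempotents of $C(Q)$ form a Bade complete Boolean algebra of projections on $X$. Replacing $C(K)$ by $C(Q)$ does not change the cyclic subspaces nor the finite generation, so we may assume the standing hypotheses of Remark~\ref{r2}. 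Now Lemma~\ref{l5} applies and yields $p\cdot X^{\star\star}=X$. The remaining task is to deduce weak sequential completeness from the identity $p\cdot X^{\star\star}=X$. The idea is: if $\{x_n\}$ is a weakly Cauchy sequence in $X$, then it is bounded, so by Alaoglu's theorem it has a $\sigma(X^{\star\star},X^{\star})$-cluster point $x^{\star\star}\in X^{\star\star}$; being weakly Cauchy, the sequence actually $\sigma(X^{\star\star},X^{\star})$-converges to $x^{\star\star}$, i.e. $x^{\star\star}(x^{\star})=\lim_n x^{\star}(x_n)$ for every $x^{\star}\in X^{\star}$. We must show $x^{\star\star}\in X$. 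For this I would check, via Lemma~\ref{l1}(3), that $x^{\star\star}=p\cdot x^{\star\star}$: for each $x^{\star}\in X^{\star}$, the measure $\mu_{x^{\star},x^{\star\star}}\in C(K)^{\star}$ satisfies $\mu_{x^{\star},x^{\star\star}}(a)=x^{\star\star}(ax^{\star})=\lim_n x^{\star}(ax_n)=\lim_n (ax^{\star})(x_n)$, which exhibits $\mu_{x^{\star},x^{\star\star}}$ as a pointwise limit of the weak-star convergent (indeed order-bounded) net of functionals $a\mapsto (ax^{\star})(x_n)$ coming from elements of $X$; since each such functional lies in $C(K)_{\star}$ and $C(K)_{\star}$ is a band, weak-star sequential limits of these stay in $C(K)_{\star}$ (this uses that $K$ is hyperstonian so that $C(K)_{\star}$, an $AL$-space, is weakly sequentially complete, hence sequentially weak-star closed as a band — here one leans on Lemma~\ref{l4} or the Vitali--Hahn--Saks type phenomenon). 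Thus $\mu_{x^{\star},x^{\star\star}}\in C(K)_{\star}$ for all $x^{\star}$, so $x^{\star\star}\in p\cdot X^{\star\star}=X$ by Lemma~\ref{l1}(3), and $x_n\to x^{\star\star}$ weakly in $X$. Therefore $X$ is weakly sequentially complete.

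The main obstacle I anticipate is the last paragraph, specifically the passage from "$p\cdot X^{\star\star}=X$" to "weakly Cauchy sequences converge weakly in $X$": one must argue carefully that the limiting functional $\mu_{x^{\star},x^{\star\star}}$ lands in the predual band $C(K)_{\star}$ rather than merely in $C(K)^{\star}$. The clean way is to recall that $p\cdot X^{\star\star}=X$ is precisely the statement that $X$ is the band of "order-continuous-type" functionals in its bidual relative to the $C(K)$-action, and then to transfer the classical fact that a Banach lattice $Z$ with $Z=(Z^{\star})_n^{\star}$ is a $KB$-space (hence weakly sequentially complete) — applied not to $X$ itself but cyclic-subspace-wise, glued together using the finitely many generators exactly as in the induction of Lemma~\ref{l5}. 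If one prefers to avoid re-running that induction, the alternative is to prove directly that $p\cdot X^{\star\star}=X$ together with Bade completeness of $\mathcal{B}$ forces weak sequential completeness, which is essentially the content one extracts by combining Lemma~\ref{l3} (the three-space property) with the cyclic case where it is Lozanovsky's theorem. Either route closes the loop $(4)\Rightarrow(1)$ and completes the proof.
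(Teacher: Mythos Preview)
Your overall architecture matches the paper's: the trivial implications $(1)\Rightarrow(2)\Rightarrow(3)$, the equivalence $(3)\Leftrightarrow(4)$ via Proposition~\ref{p1} and Lozanovsky/Theorem~\ref{t00}, the reduction to the hyperstonian setting via Theorem~\ref{t0}, and the invocation of Lemma~\ref{l5} to obtain $p\cdot X^{\star\star}=X$ are all exactly as in the paper.

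The one substantive divergence is in how you pass from $p\cdot X^{\star\star}=X$ to weak sequential completeness. The paper does \emph{not} argue directly; it runs a second induction on the number of generators: if $X$ has $r+1$ generators, take $Y=X(x_1,\dots,x_r)$, apply Lemma~\ref{l3}(2) to get that both $Y$ and the cyclic quotient $X/Y$ satisfy the condition $(\star)$, conclude by induction that both are weakly sequentially complete, and then invoke the three-space property of weak sequential completeness from~\cite{CG}. This is precisely the ``alternative'' you sketch in your final paragraph, so you have the paper's proof in hand.

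Your \emph{primary} route, however, has a genuine gap in its justification. You claim that $\mu_{x^\star,x^{\star\star}}\in C(K)_\star$ because $C(K)_\star$ is an $AL$-space, hence weakly sequentially complete, ``hence sequentially weak-star closed as a band''. Weak sequential completeness of a subspace does \emph{not} by itself imply sequential weak-$*$ closedness in the ambient dual: you only know that $\mu_{x^\star,x_n}\to\mu_{x^\star,x^{\star\star}}$ in $\sigma(C(K)^\star,C(K))$, not that the sequence is weakly Cauchy in $C(K)_\star$. What actually makes this step work is Grothendieck's theorem that $C(K)$ is a Grothendieck space whenever $K$ is Stonean (so weak-$*$ convergent sequences in $C(K)^\star$ converge weakly, and then norm-closedness of $C(K)_\star$ finishes it). Your parenthetical ``Vitali--Hahn--Saks type phenomenon'' is the right intuition---Grothendieck's proof is in that circle of ideas---but the result you need is Grothendieck's, not Lemma~\ref{l4} nor weak sequential completeness of $AL$-spaces. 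With that correction your direct argument is valid and in fact slightly slicker than the paper's, at the cost of importing a result outside the paper's internal toolkit; the paper's induction-plus-three-space route stays self-contained with the lemmas already proved.
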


\begin{proof}
It is evident that we have $(1)\Rightarrow(2)$ and $(2)\Rightarrow(3)$. The
equivalence $(3)\Leftrightarrow(4)$ follows from Theorem~\ref{t00} and
Lozanovsky's result in~\cite{Loz}. Hence the proof will be completed if we
show that $(3)\Rightarrow(1).$ Suppose no cyclic subspace of $X$ contains a
copy of $c_{0}.$ Then Theorem~\ref{t0} guarantees that the weak operator
closure of $C(K)$ in $L(X)$ is the norm closure of the linear span of a Bade
complete Boolean algebra of projections on $X.$ Since the closed submodules of
$X$ are the same with respect to either algebra of operators (i.e., either
with respect to $C(K)$ or with respect to its weak operator closure in $L(X)$
) we may assume that $C(K)$ is weak operator closed in $L(X).$ Then (see
Theorem~\ref{t0}) $K$ is hyperstonian and $\ \mathcal{B}$, the algebra of
idempotents in $C(K),$ is the Bade complete Boolean algebra of projections
that generate $C(K).$ Then $(3)$ and Lemma \ref{l5} imply that $p\cdot
X^{\star\star}=X.$ To save space we will say that $X$ satisfies $(\star)$ if
$X$ is a $C(K)$-module with hyperstonian $K$ such that the idempotents in
$C(K)$ form a Bade complete Boolean algebra of projections on $X$ and $p\cdot
X^{\star\star} = X$. Hence $(3)$ implies that $X$ satisfies $(\star)$. Once
again we will use induction on the number of generators of $X.$ If $X$ is
cyclic then it may be represented as a Banach lattice with order continuous
norm such that the band projections correspond to $\ \mathcal{B}$. By
Lemma~\ref{l2}, we have $p\cdot X^{\star\star}=(X^{\star})_{n}^{\star}$.
Therefore $p\cdot X^{\star\star}=X$ implies $X=(X^{\star})_{n}^{\star}$. Hence
by Theorem~\ref{t00}, $X$ is weakly sequentially complete (see
also~\cite[Theorem 8, page 297]{KA}) . Now, as induction hypothesis, we
suppose that whenever $X$ is a $C(K)$-module with at most $r$ generators (
$r\geq1$ ) and $X$ satisfies the condition $(\star)$ then $X$ is weakly
sequentially complete. Suppose $X$ has $r+1$ generators and satisfies
$(\star)$. Let $\{x_{0},x_{1},\cdots,x_{r}\}$ be a set of generators for $X$
and let $Y=X(x_{1},x_{2},\cdots,x_{r}).$ Then, by Lemma~\ref{l3} part $(2),$
both $Y$ and $X/Y$ satisfy the condition $(\star)$. Moreover $Y$ has $r$
generators and $X/Y=X/Y([x_{0}])$ is cyclic. Therefore by the induction
hypothesis $Y$ and $X/Y$ are both weakly sequentially complete. Then the
three-space property of weak sequential completeness (see~\cite[Theorem
4.7.a., page 122]{CG}) implies that $X$ is weakly sequentially complete.
\end{proof}

We conclude this section with two remarks.

\begin{remark}
\label{r7} The original example of Dieudonn\'{e}~\cite{Die} provides a Banach
$C(K)$-module $X$ with two generators such that every cyclic subspace is
weakly sequentially complete but $X$ cannot be represented as the sum of
two cyclic subspaces.
\end{remark}

\begin{remark}
\label{r8} The example of the Banach $l^{\infty}$-module $l^{1} \oplus c_{0}$
shows that in Theorem~\ref{t1} we cannot substitute the condition that
\textbf{every} cyclic subspace is weakly sequentially complete by a weaker
condition that for some set of generators $x_{1}, \ldots, x_{n}$ of $X$, the
cyclic subspaces $X(x_{i}), i = 1, \ldots, n$, are weakly sequentially complete.
\end{remark}

\bigskip

\subsection{Boolean algebras of projections of finite multiplicity.}

\hfill\break

Throughout this section we will assume that $\mathcal{B}$ is a Bade complete
Boolean algebra of projections on the Banach space $X.$ We let $K$ denote the
hyperstonian Stone representation space of $\mathcal{B}.$ As in section $2$,
we will assume that both $X$ and $X^{\star}$ are modules over $C(K)$ and that
$X^{\star\star}$ is a module over $C(K)^{\star\star}.$

\begin{definition}
\label{d7} Let $\mathcal{B}$ be a Bade complete Boolean algebra of projections
on $X$. $\mathcal{B}$ is said to be of \textit{uniform multiplicity} $n,$ if
there exist a set of nonzero pairwise disjoint idempotents $\{e_{\alpha}\}$ in
$\mathcal{B}$ with $\sup e_{\alpha}=1$ such that for any $e_{\alpha}$ and for
any $e\in\mathcal{B}$, $e\leq e_{\alpha}$ the $C(K)$-module $eX$ has exactly
$n$ generators.
\end{definition}

We will need the following result of Rall~\cite{Ra1} (for a proof, see Lemma 2
in~\cite{Or2}).

\begin{lemma}
\label{l6} Let $\mathcal{B}$ be of uniform multiplicity one on $X$. Then $X$
may be represented as a Banach lattice with order continuous norm such that
$\mathcal{B}$ is the Boolean algebra of band projections on $X$.
\end{lemma}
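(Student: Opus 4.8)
The plan is to build the Banach lattice structure on $X$ by gluing together the lattice structures on the cyclic pieces that $\mathcal{B}$ cuts out. By the definition of uniform multiplicity one, fix a maximal disjoint family $\{e_{\alpha}\}\subset\mathcal{B}$ with $\sup_{\alpha}e_{\alpha}=1$ and each $e_{\alpha}X$ cyclic. Since $K$ is hyperstonian and $\mathcal{B}$ is Bade complete, Remark~\ref{r6} (Veksler's theorem) represents each $Y_{\alpha}:=e_{\alpha}X$ as a Banach lattice with order continuous norm and a quasi-interior point, in which $\{e|_{Y_{\alpha}}:e\in\mathcal{B},\,e\leq e_{\alpha}\}$ is exactly the algebra of band projections. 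Distinct $Y_{\alpha}$ are linearly independent (as $e_{\alpha}e_{\beta}=0$), so it makes sense to \emph{define} the cone $X_{+}:=\{x\in X:e_{\alpha}x\in (Y_{\alpha})_{+}\text{ for every }\alpha\}$; this cone is norm closed (each $e_{\alpha}$ is continuous and $(Y_{\alpha})_{+}$ is closed) and proper (if $\pm x\in X_{+}$ then $e_{\alpha}x=0$ for all $\alpha$, so $x=\lim_{F}\sum_{\alpha\in F}e_{\alpha}x=0$ by Bade completeness).

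The crucial point is that the order induced by $X_{+}$ is a lattice order realized inside $X$. For $x\in X$, let $p_{\alpha}^{+},p_{\alpha}^{-}\in\mathcal{B}$ be the band projections of $Y_{\alpha}$ onto the carriers of $(e_{\alpha}x)^{+}$ and $(e_{\alpha}x)^{-}$; these idempotents are pairwise disjoint, both within a fixed $\alpha$ and across different $\alpha$, so by completeness of $\mathcal{B}$ the suprema $e^{+}:=\sup_{\alpha}p_{\alpha}^{+}$ and $e^{-}:=\sup_{\alpha}p_{\alpha}^{-}$ exist in $\mathcal{B}$ and are disjoint. I claim $x^{+}:=e^{+}x$, $x^{-}:=-e^{-}x$, and $|x|:=(e^{+}-e^{-})x$ are the positive part, negative part, and modulus of $x$. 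Indeed, applying $e_{\alpha}$ recovers $(e_{\alpha}x)^{\pm}$ and $|e_{\alpha}x|$ coordinatewise, while Bade completeness ($\sum_{\alpha\in F}p^{\pm}_{\alpha}\uparrow e^{\pm}$, hence $\sum_{\alpha\in F}p^{\pm}_{\alpha}x\to e^{\pm}x$) identifies these global elements with the coordinatewise sums; minimality of $|x|$ among the upper bounds of $\{x,-x\}$ in $X_{+}$ is checked coordinatewise, since the order is coordinatewise. Consequently $x\vee y=x+(y-x)^{+}$ always lies in $X$, so $(X,X_{+})$ is a vector lattice, and the Riesz axioms transfer coordinatewise from the $Y_{\alpha}$. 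Moreover $e^{+}-e^{-}$ is a self-adjoint element of $C(K)$ of norm at most $1$ with $(e^{+}-e^{-})|x|=x$, so $\||x|\|\leq\|x\|\leq\||x|\|$; the given norm is modulus-invariant.

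To finish I would verify that the original norm is a lattice norm and order continuous, and that $\mathcal{B}$ is exactly the algebra of band projections. For the lattice norm, put $\chi_{F}=\sum_{\alpha\in F}e_{\alpha}$ for finite $F$; since $\chi_{F}a\to a$ and $\chi_{F}b\to b$, it suffices to prove $\|\chi_{F}a\|\leq\|\chi_{F}b\|$ whenever $0\leq a\leq b$. On the finite direct sum $\chi_{F}X=\bigoplus_{\alpha\in F}Y_{\alpha}$ one replaces the coordinates of $a$ by those of $b$ one at a time: each $Y_{\alpha}$, having order continuous norm, is Dedekind complete, so its center $Z(Y_{\alpha})$ is topologically full in the sense of Wickstead~\cite{Wi}; hence $e_{\alpha}a$ can be approximated in norm by $T e_{\alpha}b$ with $0\leq T\leq I$ in $Z(Y_{\alpha})\subset C(K)$, and since such $T$ has $C(K)$-norm at most $1$ and $C(K)$ acts contractively on $X$, monotonicity of the norm follows in the limit. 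Order continuity then drops out from order continuity of each $Y_{\alpha}$, the lattice-norm property, and Bade completeness: given $x_{\gamma}\downarrow 0$, split $x_{\gamma}=\chi_{F}x_{\gamma}+(1-\chi_{F})x_{\gamma}$, control the tail uniformly using $(1-\chi_{F})x_{\gamma}\leq(1-\chi_{F})x_{\gamma_{0}}$ together with $\chi_{F}x_{\gamma_{0}}\to x_{\gamma_{0}}$, and dispose of the finite part by order continuity of $\bigoplus_{\alpha\in F}Y_{\alpha}$. Finally, every $e\in\mathcal{B}$ is a band projection (its range is a projection band, verified coordinatewise), and conversely any band projection $q$ commutes with each $e_{\alpha}$, so restricts to a band projection of $Y_{\alpha}$, hence agrees on $Y_{\alpha}$ with some $e^{(\alpha)}\in\mathcal{B}$ with $e^{(\alpha)}\leq e_{\alpha}$, and then $q=\sup_{\alpha}e^{(\alpha)}\in\mathcal{B}$.

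I expect the main obstacle to be precisely the realization issue in the second paragraph: showing that the coordinatewise lattice operations actually return elements of $X$. This is exactly where completeness of the Boolean algebra $\mathcal{B}$ (to form $e^{\pm}$) and Bade's continuity axiom (to sum the pieces) are indispensable — without them one would only get operations into some completion or into $X^{\star\star}$. The secondary point requiring genuine work is that the ambient norm is a lattice norm; there the Dedekind completeness, and hence topological fullness of the center, of each cyclic piece is what makes the one-coordinate-at-a-time comparison go through.
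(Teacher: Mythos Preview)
The paper does not prove this lemma at all: it attributes the result to Rall~\cite{Ra1} and sends the reader to~\cite[Lemma~2]{Or2} for a proof, so there is no in-paper argument to compare against.

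Your proposal is the natural direct argument and is essentially correct. Two minor points are worth tightening. First, your lattice-norm step is more elaborate than necessary: for finite $F$ the submodule $\chi_F X$ is itself \emph{cyclic} (generated by $\sum_{\alpha\in F}x_\alpha$, since $e_\alpha\sum x_\beta=x_\alpha$), so Proposition~\ref{p1} already makes the ambient norm a lattice norm on $\chi_F X$ with the coordinatewise cone --- no one-coordinate-at-a-time replacement is needed. Second, topological fullness alone only gives approximants $T_n e_\alpha b\to e_\alpha a$ with $T_n\in Z(Y_\alpha)$, not the bound $0\le T\le I$; the bound (indeed exact equality $e_\alpha a = T e_\alpha b$) comes from the identification $Z(Y_\alpha)=e_\alpha C(K)$ together with the $L^0$-representation of the order-continuous cyclic lattice $Y_\alpha$. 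Neither issue is a genuine gap: the substantive part --- forming $e^{\pm}=\sup_\alpha p_\alpha^{\pm}$ in the complete Boolean algebra $\mathcal{B}$ and invoking the Bade continuity axiom to place $|x|=(e^{+}-e^{-})x$ inside $X$ --- is exactly what makes the construction work, and you identify it correctly.
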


Next, as in~\cite{KO} we can state the following corollary of Theorem \ref{t1}.

\begin{corollary}
\label{c1} Let $X$ be a Banach space and let $\mathcal{B}$ be a Bade complete
Boolean algebra of projections on $X$ that is of uniform multiplicity $n.$
Then conditions (1)-(4) of Theorem \ref{t1} and condition (5) $p\cdot
X^{\star\star}=X$ are equivalent.
\end{corollary}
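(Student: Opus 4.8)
The plan is to reduce the uniform multiplicity $n$ case to the finitely generated case already handled in Theorem~\ref{t1}, by decomposing $X$ into a direct integral of finitely generated pieces. Since $\mathcal{B}$ is of uniform multiplicity $n$, there is a set of pairwise disjoint idempotents $\{e_\alpha\}$ with $\sup e_\alpha = 1$ such that each $e_\alpha X$ (and each $eX$ for $e \le e_\alpha$ in $\mathcal{B}$) is a $C(K)$-module with exactly $n$ generators. First I would observe that conditions (1)--(4) of Theorem~\ref{t1} and condition (5) are all ``local'' with respect to $\mathcal{B}$ in the appropriate sense: if $X$ contains a copy of $c_0$, then because $c_0$ has no unconditionally convergent basis and $\mathcal{B}$ is Bade complete, a gliding-hump/exhaustion argument shows that some $e_\alpha X$ already contains a copy of $c_0$ (alternatively, a cyclic subspace witnessing the copy of $c_0$ lives, after a small perturbation, inside a single $e_\alpha X$); and conversely any copy of $c_0$ in some $e_\alpha X$ is a copy of $c_0$ in $X$. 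The same localization works for cyclic subspaces, since a cyclic subspace $X(x)$ with $x = \sum_\alpha e_\alpha x$ decomposes accordingly.

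The heart of the argument is then: each $e_\alpha X$ is a finitely generated $C(K)$-module (in fact a $C(e_\alpha K)$-module) on which the idempotents of $C(e_\alpha K)$ form a Bade complete Boolean algebra, so Theorem~\ref{t1} applies verbatim to $e_\alpha X$. Thus for each $\alpha$, the four conditions are equivalent on $e_\alpha X$, and by Lemma~\ref{l5} together with Lemma~\ref{l2} they are also equivalent to $p \cdot (e_\alpha X)^{\star\star} = e_\alpha X$, i.e.\ to condition (5) restricted to $e_\alpha X$. Next I would prove that condition (5) itself localizes: $p \cdot X^{\star\star} = X$ if and only if $p \cdot (e_\alpha X)^{\star\star} = e_\alpha X$ for every $\alpha$. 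One direction uses Lemma~\ref{l3}(2) applied to the closed submodule $e_\alpha X$ (noting $e_\alpha X$ is complemented, hence both a submodule and a quotient). For the reverse direction one uses the Bade completeness of $\mathcal{B}$: given $x^{\star\star} \in X^{\star\star}$, the family $\{e_\alpha \cdot (p \cdot x^{\star\star})\}$ lies in $X$ by hypothesis, and a standard computation with $\mu_{x^\star, x^{\star\star}}$ from Lemma~\ref{l1}, combined with order-continuity of the norm on cyclic subspaces (Remark~\ref{r6}) and the norm-convergence of partial suprema guaranteed by Bade completeness, shows that $p \cdot x^{\star\star} = \sum_\alpha e_\alpha \cdot (p \cdot x^{\star\star})$ converges in $X$.

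Assembling the pieces: $(5) \Leftrightarrow$ ($(5)$ holds on each $e_\alpha X$) $\Leftrightarrow$ ($(3)$ holds on each $e_\alpha X$, by Theorem~\ref{t1} and Lemma~\ref{l5}) $\Leftrightarrow$ $(3)$ holds on $X$ (by localization of the ``no copy of $c_0$'' condition), and $(3) \Leftrightarrow (4)$ by Theorem~\ref{t00} and Lozanovsky. Finally $(3) \Rightarrow (1)$: from $(3)$ we get $(5)$, hence $p \cdot X^{\star\star} = X$; to deduce weak sequential completeness of $X$ itself I would argue that $X$ embeds isometrically as a band-like piece of $(X^\star)^\star_n$-type object — more concretely, that $p \cdot X^{\star\star} = X$ means every weakly Cauchy sequence in $X$ has its weak-$\star$ limit in $X^{\star\star}$ already lying in $X$, so the limit is attained weakly. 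The implications $(1) \Rightarrow (2) \Rightarrow (3)$ are trivial.

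The main obstacle I expect is the reverse direction of the localization of condition (5): passing from $p \cdot (e_\alpha X)^{\star\star} = e_\alpha X$ for all $\alpha$ to $p \cdot X^{\star\star} = X$. This is where one genuinely needs the interaction between the Arens module structure on $X^{\star\star}$ (Lemma~\ref{l1}), the Bade completeness of $\mathcal{B}$ (so that $\sup$ of idempotents acts continuously), and the fact established along the way that $p \cdot X^{\star\star}$ behaves like a space with order-continuous norm, so that the net of partial sums $\{\chi_F \cdot (p \cdot x^{\star\star})\}_{F \text{ finite}}$ is norm-Cauchy rather than merely weak-$\star$ convergent. A secondary subtlety is making sure that ``contains a copy of $c_0$'' localizes correctly — one should use that a copy of $c_0$ inside $X$ yields, via Theorem~\ref{t0} and a sliding-hump argument against the Bade-complete algebra, a copy of $c_0$ supported (up to arbitrarily small perturbation) on finitely many, hence on one, of the $e_\alpha$.
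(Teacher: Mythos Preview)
Your overall strategy --- localize to the finitely generated pieces $e_\alpha X$ and invoke Theorem~\ref{t1} there --- is different from the paper's, and it contains a genuine error plus an underestimated difficulty.

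The error is your claim that $(5)\Rightarrow(1)$ directly: ``$p\cdot X^{\star\star}=X$ means every weakly Cauchy sequence in $X$ has its weak-$*$ limit in $X^{\star\star}$ already lying in $X$.'' This is false as stated. If $\{x_n\}$ is weakly Cauchy with weak-$*$ limit $x^{\star\star}$, the hypothesis gives only $p\cdot x^{\star\star}\in X$; to conclude $x^{\star\star}\in X$ you need $x^{\star\star}=p\cdot x^{\star\star}$, i.e.\ $x^{\star\star}\in p\cdot X^{\star\star}$. By Lemma~\ref{l1}(3) this means $\mu_{x^\star,x^{\star\star}}\in C(K)_\star$ for every $x^\star$, and there is no reason a weak-$*$ limit of a sequence in $X$ should satisfy this. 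Indeed, in the paper the implication goes the \emph{other} way in Theorem~\ref{t2}: one first proves $(1)$ and then uses weak sequential completeness to deduce $(5)$. In both Theorem~\ref{t1} and the present corollary, $(5)\Rightarrow(1)$ is obtained not directly but by induction together with Lemma~\ref{l3}(2) and the three-space property of weak sequential completeness.

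The underestimated difficulty is the reverse localization of $(5)$ and of $(1)$. Passing from ``$(5)$ on every $e_\alpha X$'' to ``$(5)$ on $X$'' (or from ``$(1)$ on every $e_\alpha X$'' to ``$(1)$ on $X$'') is essentially the content of the much longer Theorem~\ref{t2}; your sketch invokes Bade completeness to make $\sum_\alpha e_\alpha\cdot(p\cdot x^{\star\star})$ converge in norm, but Bade completeness is a statement about nets $\chi_\gamma x$ with $x\in X$, and here $p\cdot x^{\star\star}$ is not yet known to lie in $X$. The gliding-hump / disjoint-sequence machinery needed to close this gap is exactly what the paper develops later, and it is considerably more work than a paragraph.

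The paper's route avoids all of this: it runs the \emph{same} induction on $n$ as in Lemma~\ref{l5} (for $(3)\Rightarrow(5)$) and Theorem~\ref{t1} (for $(5)\Rightarrow(1)$, via three-space), observing that the only place where those arguments used ``finitely generated'' rather than ``uniform multiplicity $n$'' was the base case. The base case here is uniform multiplicity one, which is handled by Lemma~\ref{l6}: $X$ is then a Banach lattice with order continuous norm, and if it contained a lattice copy of $c_0$ with disjoint basis $\{x_n\}$, setting $u=\sum 2^{-n}x_n$ would place that copy inside the cyclic subspace $X(u)$, contradicting $(3)$. So $X$ is a KB-space, whence $p\cdot X^{\star\star}=(X^\star)^\star_n=X$ and $X$ is weakly sequentially complete. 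No localization to the $e_\alpha$ is needed.
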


\begin{proof}
We have that the implications $(1)\Longrightarrow(2)\Longrightarrow(3)$ and
$(3)\Longleftrightarrow(4)$ hold as in the proof of Theorem \ref{t1}. The
proof would be complete if we show $(3)\Longrightarrow(5)$ and
$(5)\Longrightarrow(1).$ The proof of these implications follow exactly as in
Lemma~\ref{l5} for $(3)\Longrightarrow(5)$ and as in Theorem \ref{t1} for
$(5)\Longrightarrow(1)$. Thus, provided that the case $n=1$ is settled, we can
finish the proof by induction. Therefore suppose $\mathcal{B}$ is of uniform
multiplicity one on $X.$ Then by Lemma~\ref{l6}, $X$ may be represented as a
Banach lattice with order continuous norm such that $\mathcal{B}$ corresponds
to the band projections of the lattice. Assume that $(3)$ holds and $X$ is not
a KB-space (i.e., $p\cdot X^{\star\star}=(X^{\star})_{n}^{\star}\neq X$ ). Then by
Theorem~\ref{t00}, $X$ contains a sublattice isomorphic to $c_{0}$. Suppose
that $\{x_{n}\}$ is the disjoint sequence in this sublattice that corresponds
to the standard basis of $c_{0}$ and let $\{e_{n}\}$ be the disjoint
idempotents in $\mathcal{B}$ such that each $e_{n}$ is the band projection on
the band generated by $x_{n}$ in $X.$ Let $u=\sum\frac{1}{2^{n}}x_{n}$ in $X$.
Consider the cyclic subspace (i.e. the band) $X(u)$ of $X$. Since
$e_{n}u=\frac{1}{2^{n}}x_{n}$ for each $n$ , the sublattice generated by the
positive disjoint sequence $\{x_{n}\}$ is in $X(u)$ and this contradicts
$(3).$ Therefore $X$ is a KB-space, that is $p\cdot X^{\star\star}%
=(X^{\star})_{n}^{\star}=X$ and $X$ is weakly sequentially complete. Hence both
$(3)\Longrightarrow(5)$ and $(5)\Longrightarrow(1)$ hold for $n=1.$
\end{proof}

\begin{definition}
\label{d8} A Bade complete Boolean algebra of projections $\mathcal{B}$ on $X$
is said to be of finite multiplicity on $X$ if there exists a collection of
disjoint idempotents $\{e_{\alpha}\}$ in $\mathcal{B}$ such that, for each
$\alpha,$ $e_{\alpha}X$ is $n_{\alpha}$-generated and $\sup e_{\alpha}=1.$
\end{definition}

We note that the collection $\{n_{\alpha}\}$ of positive integers need not be
bounded. Then by a well known result of Bade \cite[XVIII.3.8, p. 2267]{DS},
there exists a sequence of disjoint idempotents $\{e_{n}\}$ in $\mathcal{B}$
such that, for each $n,$ $\mathcal{B}$ is of uniform multiplicity $n$ on
$e_{n}X$ and $\sup e_{n}=1.$ Also the norm closure of the sum of the sequence
of the spaces $\{e_{n}X\}$ is equal to $X.$ In our next result we will show
that the conclusions of Corollary \ref{c1} extend to this case. In the proof
of the theorem we will use a vector version of a standard disjoint sequence
method \cite[Theorem 1.c.10, p. 23]{LT1}.

\begin{theorem}
\label{t2} Let $X$ be a Banach space and let $\mathcal{B}$ be a Bade complete
Boolean algebra of projections on $X$ such that $\mathcal{B}$ is of finite
multiplicity on $X.$ Then conditions (1)-(4) of Theorem \ref{t1} and condition
(5) $p\cdot X^{\star\star}=X$ are equivalent.
\end{theorem}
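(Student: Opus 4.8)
The plan is to follow the pattern already established in Corollary~\ref{c1}, reducing the finite multiplicity case to the uniform multiplicity case via the Bade decomposition $\sup e_n = 1$, where $\mathcal{B}$ is of uniform multiplicity $n$ on $e_n X$. As in the previous proofs, the implications $(1)\Rightarrow(2)\Rightarrow(3)$ are trivial and $(3)\Leftrightarrow(4)$ follows from Theorem~\ref{t00} together with Lozanovsky's result, so it suffices to establish $(3)\Rightarrow(5)$ and $(5)\Rightarrow(1)$. For each $n$, Corollary~\ref{c1} applies to the space $e_n X$ (with the Bade complete Boolean algebra $e_n\mathcal{B}$ of uniform multiplicity $n$), so conditions $(1)$--$(5)$ are equivalent for each $e_n X$ individually. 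The task is therefore to patch together the pieces $e_n X$ into a statement about $X = \mathrm{cl}(\sum_n e_n X)$.

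For $(5)\Rightarrow(1)$: assume $p\cdot X^{\star\star} = X$. First I would observe, using Lemma~\ref{l3} part $(2)$ (applied to the closed submodule $e_n X$ of $X$, noting that $e_n\mathcal{B}$ is still Bade complete on $e_n X$), that $p\cdot (e_n X)^{\star\star} = e_n X$ for every $n$. Hence by Corollary~\ref{c1} each $e_n X$ is weakly sequentially complete. Now suppose $X$ is not weakly sequentially complete; by Theorem~\ref{t00}-type reasoning one needs a copy of $c_0$ inside $X$. The key point is the vector disjoint sequence method cited from \cite[Theorem 1.c.10, p. 23]{LT1}: given a weakly Cauchy sequence in $X$ that is not weakly convergent, one can use the disjointness of the idempotents $\{e_n\}$ and a gliding hump / small perturbation argument to extract a block-type subsequence that is essentially supported (up to $\varepsilon$) on finitely many of the summands $e_n X$, or else spreads out across infinitely many $e_n X$. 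In the first case weak Cauchy-ness would already force convergence since a finite sum $\bigoplus_{n\le N} e_n X$ is weakly sequentially complete (finite direct sums of w.s.c. spaces are w.s.c.); in the second case one produces a sequence equivalent to the unit vector basis of $c_0$, contradicting $(3)$ — but we are proving $(5)\Rightarrow(1)$ directly, so more cleanly: the block sequence spread across infinitely many $e_n X$ together with the uniform boundedness of partial sums (which one gets from $p\cdot X^{\star\star}=X$ controlling the relevant suprema, via Lemma~\ref{l4} applied cyclically as in Lemma~\ref{l5}) yields the limit inside $X$.

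For $(3)\Rightarrow(5)$: assume no cyclic subspace of $X$ contains a copy of $c_0$. Then the same holds for each submodule $e_n X$, so by Corollary~\ref{c1} we get $p\cdot (e_n X)^{\star\star} = e_n X$ for each $n$. I would then take an arbitrary $x^{\star\star}\in X^{\star\star}$ and show $p\cdot x^{\star\star}\in X$. Writing $\chi_N = e_1 + \cdots + e_N$, each $\chi_N$ is an idempotent in $\mathcal{B}$ and $\chi_N \cdot (p\cdot x^{\star\star})$ lies in $p\cdot(\chi_N X)^{\star\star} = \chi_N X \subseteq X$ by the three-space argument applied to the finite decomposition. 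The remaining issue is convergence: one must show $\chi_N\cdot(p\cdot x^{\star\star}) \to p\cdot x^{\star\star}$ in norm. This is exactly where Bade completeness of $\mathcal{B}$ (Definition~\ref{d5}(2), $\sup\chi_N = 1$) enters — but Bade completeness gives norm convergence of $\chi_N y \to y$ for $y\in X$, not a priori for $y = p\cdot x^{\star\star}\in X^{\star\star}$. The fix is to argue that $\{\chi_N\cdot(p\cdot x^{\star\star})\}$ is norm-Cauchy: if not, one extracts (as in Lemma~\ref{l5}) a disjoint sequence $\{e_{n_k}\cdot(p\cdot x^{\star\star})\}$ in $X$ bounded below in norm with uniformly bounded partial sums, hence lattice-equivalent to the $c_0$ basis inside the cyclic subspace $X(\sum 2^{-k} e_{n_k}\cdot(p\cdot x^{\star\star}))$, contradicting $(3)$; so the sequence converges to some $x\in X$, and then $p\cdot x^{\star\star} = x$ follows by testing against $X^{\star}$.

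The main obstacle I anticipate is precisely the unboundedness of the multiplicities $\{n_\alpha\}$ noted in the paper: the induction on the number of generators used in Lemma~\ref{l5} and Theorem~\ref{t1} does not terminate uniformly, so one cannot treat $X$ itself as finitely generated. Everything must be done ``level by level'' on the $e_n X$ and then glued, and the gluing for $(3)\Rightarrow(5)$ requires the non-trivial observation that the net $\{\chi_N\cdot(p\cdot x^{\star\star})\}$, which lives in $X^{\star\star}$, is actually norm-Cauchy in $X$ — and this is where the disjoint-sequence-producing-$c_0$ contradiction (the vector version of \cite[Theorem 1.c.10]{LT1}) does the real work, mirroring the inductive step of Lemma~\ref{l5} but now with the index $n$ of the Bade summand playing the role previously played by the extra generator.
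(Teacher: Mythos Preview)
Your logical route differs from the paper's: you aim for $(3)\Rightarrow(5)$ and $(5)\Rightarrow(1)$, whereas the paper proves $(3)\Rightarrow(1)$ directly (the hard step), then $(1)\Rightarrow(5)$ and $(5)\Rightarrow(3)$. Your $(3)\Rightarrow(5)$ argument is correct and is in fact cleaner than the paper's detour through $(1)$: showing that $\{\chi_N\cdot(p\cdot x^{\star\star})\}$ is norm-Cauchy by extracting, if not, a disjoint sequence in $X$ with norms bounded below and partial sums bounded by $\|x^{\star\star}\|$, hence a $c_0$-sublattice in a cyclic subspace, works exactly as in Lemma~\ref{l5}.

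The gap is in your $(5)\Rightarrow(1)$. Two specific problems. First, the block sequence produced by the gliding hump is \emph{not} equivalent to the $c_0$ basis: the blocks $z_n=(\chi_{p_n}-\chi_{p_{n-1}})y_n$ come from \emph{different} elements $y_n$, so there is no reason for the partial sums $z_1+\cdots+z_m$ to be uniformly bounded, and your appeal to ``$p\cdot X^{\star\star}=X$ controlling the relevant suprema via Lemma~\ref{l4}'' does not justify this. What the paper actually shows is that $\{z_n\}$ is equivalent to the $\ell^1$ basis, using the embedding $l^\infty\hookrightarrow C(K)$ via $(\alpha_n)\mapsto\sum\alpha_n(\chi_{p_n}-\chi_{p_{n-1}})$ to get $\|\sum\xi_nz_n\|=\|\sum|\xi_n|z_n\|\ge g(\sum|\xi_n|z_n)\ge\frac{\delta}{2}\sum|\xi_n|$; the contradiction is then that an $\ell^1$-basic sequence cannot be weakly Cauchy. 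Second, you omit the reduction step entirely: the gliding hump only handles the special case where $\chi_n x_i\to 0$ weakly for each $n$. For a general weak Cauchy sequence $\{x_i\}$ one must first locate a candidate limit $v\in X$. The paper does this by setting $u_k=\text{w-}\lim_i e_kx_i\in e_kX$, forming $w=\sum 2^{-k}u_k$, and using hypothesis $(3)$ to ensure the cyclic Banach lattice $X(w)$ is a KB-space so that the increasing bounded sequence $v_k=u_1+\cdots+u_k$ has a norm limit $v=\sup v_k$ in $X$; only then does $\{x_i-v\}$ fall into the special case. Your dichotomy ``essentially supported on finitely many $e_nX$'' versus ``spread out'' does not capture this reduction.
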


\begin{proof}
It is clear that $(1)\Longrightarrow(2)\Longrightarrow(3)\Longleftrightarrow
(4).$ Initially we will show $(3)\Longrightarrow(1).$ We will use the notation
in the discussion before the statement. Since $\mathcal{B}$ is of finite
multiplicity on $X,$ by Corollary \ref{c1}, for each $n,$ $e_{n}X$ is weakly
sequentially complete. Let $\chi_{n}=e_{1}+e_{2}+\ldots+e_{n}$ for each $n.$
Then $\chi_{n}X$ is weakly sequentially complete for each $n.$ Moreover
$\chi_{n}\uparrow1$ in $\mathcal{B}$ implies that%
\begin{equation}
\underset{n\rightarrow\infty}{\lim}\left\Vert \left(  1-\chi_{n}\right)
x\right\Vert =0 \tag{*}%
\end{equation}
for any $x\in X.$ Suppose $\{x_{i}\}$ is a weak Cauchy sequence in $X$ such
that
\begin{equation}
\underset{i\rightarrow\infty}{\lim}f(\chi_{n}x_{i})=0 \tag{**}%
\end{equation}
for each $n$ and for all $f\in X^{\star}$. Suppose, on the other hand,
$\{x_{i}\}$ does not converge to $0$ in the weak topology. Then, without loss
of generality, we may suppose that for some real linear functional $g$ on $X$
with $\left\Vert g\right\Vert =1$ and $\delta>0,$ we have that, for all $i,$%
\[
g(x_{i})\geq\delta\text{ and }\underset{i\rightarrow\infty}{\lim}g(x_{i}%
)\geq\delta>0\text{ .}%
\]
We define a sequence $\{Y_{i}\}_{i=0}^{\infty}$ of convex subsets of $X$ by
\[
Y_{i}:=co\{x_{k}:k=i+1,i+2,\ldots\}.
\]
Let '$cl$' denote norm closure of a set in $X$. Then $\left(  \ast\ast\right)
$ and duality imply that
\begin{equation}
0\in cl(\chi_{n}Y_{i}),\text{ for each }n\text{ and each }i\text{.} \tag{***}%
\end{equation}
(Note that $y\in Y_{i}$ may have several convex representations by elements of
$\{x_{k}:k=i+1,i+2,\ldots\}$. If we pick some $y\in Y_{i},$ it should be
understood that at the same time we designate and fix some convex
representation of $y$ by elements of $\{x_{k}:k=i+1,i+2,\ldots\}.$ From then
onwards our references to the convex representation of $y$, will always be to
the designated convex representation that we fixed. In particular, only
finitely many elements of $\{x_{k}:k=i+1,i+2,\ldots\}$ are involved in the
convex representation. Namely the elements of $\{x_{k}:k=i+1,i+2,\ldots\}$
that have a non-zero coefficient in the convex representation. One of the
elements in this finite subset of $\{x_{k}:k=i+1,i+2,\ldots\}$ has the largest
index. We will refer to this index as the largest index in the convex
combination of $y.$)

We will choose a sequence $\{y_{n}\}_{n=1}^{\infty}$ that has the following properties:

\begin{enumerate}
\item $y_{1}\in co\{x_{k}:k=1,2,\ldots,p_{1}-1\}\subset Y_{0}$ such that
$\left\Vert (1-\chi_{p_{1}})y_{1}\right\Vert <\frac{\delta}{2};$

\item $y_{n}\in co\{x_{k}:k=p_{n-1}+1,\ldots,p_{n}-1\}\subset Y_{p_{n - 1}}$
such that $\left\Vert \chi_{p_{n-1}}y_{n}\right\Vert <\frac{\delta}{2^{2n}}$
and $\left\Vert (1-\chi_{p_{n}})y_{n}\right\Vert <\frac{\delta}{2^{n}}$ for
each $n=2,\ldots$.
\end{enumerate}

\noindent Note that when chosen as above, $\{p_{n}\}$ is a subsequence of
$\{n\}$ such that $1<p_{1}$ and $p_{n-1}+1<p_{n}$ ( $n=2,3,\ldots$ ). Now to
see that the sequence can be chosen as above, take $y_{1}\in Y_{0}$ and take
$p_{1}$ strictly greater than the largest index in the convex combination of
$y_{1},$ such that $\left\Vert (1-\chi_{p_{1}})y_{1}\right\Vert <\frac{\delta
}{2}$ (use $\left(  \ast\right)  $). Then take $y_{2}\in Y_{p_{1}}$ such that
$\left\Vert \chi_{p_{1}}y_{2}\right\Vert <\frac{\delta}{2^{4}}$ (use $\left(
\ast\ast\ast\right)  $). Next take $p_{2}$ strictly greater than the largest
index in the convex combination of $y_{2},$ such that $\left\Vert
(1-\chi_{p_{2}})y_{2}\right\Vert <\frac{\delta}{2^{2}}$ (use $\left(
\ast\right)  $). Suppose that we chose $y_{1},\ldots,y_{n}$ ( $n\geq2$) and
$p_{1}<\ldots<p_{n}$ as required. Then take $y_{n+1}\in Y_{p_{n}}$ such that
$\left\Vert \chi_{p_{n}}y_{n+1}\right\Vert <\frac{\delta}{2^{2(n+1)}}$ (use
$\left(  \ast\ast\ast\right)  $). Also take $p_{n+1}$ strictly greater than
the largest index in the convex combination of $y_{n+1}$ such that $\left\Vert
\left(  1-\chi_{p_{n+1}}\right)  y_{n+1}\right\Vert <\frac{\delta}{2^{n+1}}$
(use $\left(  \ast\right)  $). This shows that we can choose inductively the
sequence $\left\{  y_{n}\right\}  $ with the stated properties.

As chosen $\left\{  y_{n}\right\}  $ is a weak Cauchy sequence in $X$ such
that, for each $f\in X^{\star},$
\[
\underset{n\rightarrow\infty}{\lim}f(y_{n})=\underset{n\rightarrow\infty}%
{\lim}f(x_{n}).
\]
This follows, since $y_{n+1}\in co\left\{  x_{k}:k=p_{n}+1,\ldots
p_{n+1}-1\right\}  $ and therefore all the indices in the convex combination
of $y_{n+1}$ are strictly greater than $n.$ Also%
\[
\delta\leq g(y_{n})
\]
for all $n=1,2,\ldots$.

Now we define a new sequence $\left\{  z_{n}\right\}  $ such that
\[
z_{1}=\chi_{p_{1}}y_{1}\text{ and }z_{n}=(\chi_{p_{n}}-\chi_{p_{n-1}})y_{n}%
\]
for each $n=2,\ldots$. Since%
\[
\left\Vert y_{n}-z_{n}\right\Vert =\left\Vert (1-\chi_{p_{n}})y_{n}%
+\chi_{p_{n-1}}y_{n}\right\Vert <\frac{\delta}{2^{n}}(1+\frac{1}{2^{n}})
\]
for each $n=2,\ldots$, $\left\{  z_{n}\right\}  $ is also a weak Cauchy
sequence such that%
\begin{equation}
\underset{n\rightarrow\infty}{\lim}f(z_{n})=\underset{n\rightarrow\infty}%
{\lim}f(y_{n})=\underset{n\rightarrow\infty}{\lim}f(x_{n}). \tag{4*}%
\end{equation}
Furthermore, we have from above%
\begin{equation}
\frac{\delta}{2}\leq g(z_{n}) \tag{5*}%
\end{equation}
for each $n=1,2,\ldots$. Note that by definition the elements of the sequence
$\{z_{n}\}$ are in the range of disjoint projections, that is if $m\neq n$
then $(\chi_{p_{m}}-\chi_{p_{m-1}})(\chi_{p_{n}}-\chi_{p_{n-1}})=0.$ This
means that $l^{\infty}$ may be considered as an isometric unital subalgebra of
$C(K)$ where the correspondence is given by $(\alpha_{n})\in l^{\infty
}\leftrightsquigarrow\left(  \sum\alpha_{n}(\chi_{p_{n}}-\chi_{p_{n-1}%
})\right)  \in C(K)$ \cite{KO}. For any $(\xi_{n})\in l^{1},$ there is
$(\alpha_{n})\in l^{\infty}$ with $|\alpha_{n}|=1$ for all $n$ such that
$(\alpha_{n}\xi_{n})=(|\xi_{n}|)$ and $(\overline{\alpha_{n}}|\xi_{n}%
|)=(\xi_{n}).$ From this it follows that, when we consider $\sum\xi_{n}%
z_{n}\in X$ and use the embedding of $l^{\infty}$ in $C(K),$ we have%
\[
\left\Vert \sum\xi_{n}z_{n}\right\Vert =\left\Vert \sum|\xi_{n}|z_{n}%
\right\Vert .
\]
Then, by (5*), we have%
\[
\frac{\delta}{2}\left(  \sum|\xi_{n}|\right)  \leq g\left(  \sum|\xi_{n}%
|z_{n}\right)  \leq\left\Vert \sum\xi_{n}z_{n}\right\Vert \leq\left(
\sup\left\Vert z_{n}\right\Vert \right)  \left(  \sum|\xi_{n}|\right)  .
\]
That is $X$ has a subspace isomorphic to $l^{1}$ and $\left\{  z_{n}\right\}
$ corresponds to the standard basis of $l^{1}$ in the subspace of $X$ that is
isomorphic to $l^{1}.$ But this means that there exists $f\in X^{\star}$ such
that when restricted to $\left\{  z_{n}\right\}  $ we have%
\[
f(z_{n})=(-1)^{n}%
\]
for each $n=1,2,\ldots$. This contradicts the fact that $\left\{
z_{n}\right\}  $ is a weak Cauchy sequence and that $\underset{n\rightarrow
\infty}{\lim}f(z_{n})$ exists. Therefore the real linear functional $g$ with
the stated properties on $\left\{  x_{i}\right\}  $ cannot exist. So if
$\left\{  x_{i}\right\}  $ is a weak Cauchy sequence such that $\left\{
\chi_{n}x_{i}\right\}  $ converges weakly to $0$ for each $n$, then $\left\{
x_{i}\right\}  $ also converges to $0$ weakly in $X.$

Now suppose that $\left\{  x_{n}\right\}  $ is a weak Cauchy sequence in $X$
such that $\left\{  e_{k}x_{n}\right\}  $ converges weakly to $u_{k}\in
e_{k}X$ for each $k$. Since $\left\{  x_{n}\right\}  $ is bounded we have that
the sequence $\{u_{k}\}$ is also bounded by the same constant. Consider the
cyclic subspace $X(w)$ of $X$ that is generated by $w=\sum\frac{1}{2^{k}}%
u_{k}.$ By the hypothesis $(3)$, $X(w)$ does not contain any copy of $c_{0}.$
Therefore when represented as a Banach lattice with the positive
quasi-interior point $w,$ $X(w)$ becomes a KB-space. Also $e_{k}w=\frac
{1}{2^{k}}u_{k}$ for each $k$ imply that $\{u_{k}\}$ is a positive disjoint
sequence in $X(w).$ Let%
\[
v_{k}=u_{1}+u_{2}+\ldots+u_{k}%
\]
for each $k.$ Moreover $\{\chi_{k}x_{n}\}$ converges weakly to $v_{k}$ in
$\chi_{k}X$ for each $k.$ It is clear that $\{v_{k}\}$ is a norm bounded
sequence. (It has the same bound as $\{x_{n}\}.$) So $\{v_{k}\}$ is an
increasing and bounded positive sequence in the KB-space $X(w).$ Whence there
exists $v=\sup v_{k}$ such that $\left\Vert v-v_{k}\right\Vert \longrightarrow
0$ in $X(w).$ Therefore $\chi_{k}v=v_{k}$ for each $k.$ That is $\{x_{n}-v\}$
is a weak Cauchy sequence such that $\{\chi_{k}\left(  x_{n}-v\right)  \}$
converges weakly to $0$ for each $k.$ Then by the first part of our proof
$\{x_{n}-v\}$ also converges weakly to $0$ in $X$. This means that $X$ is
weakly sequentially complete. That is $\left(  3\right)  \Longrightarrow(1).$

Now suppose $(5)$ holds. By Lemma~\ref{l3} part (2), we have that $(5)$ holds
on each closed submodule of $X.$ In particular, we have that $(5)$ holds for
each cyclic subspace of $X.$ That is $p\cdot X(x)^{\star\star}=X(x)$ for each
$x\in X(x).$ Since each cyclic subspace is represented as a Banach lattice, by
Lemma~\ref{l2}, each cyclic subspace is a KB-space. This means that
$(5)\Longrightarrow(3).$ Therefore $(5)\Longrightarrow(1).$

Conversely, suppose $(1)$ holds. In particular $\chi_{n}X$ is weakly
sequentially complete for each $n.$ Then, by Corollary~\ref{c1}, each
$\chi_{n}X$ satisfies $(5).$ That is with $(\chi_{n}X)^{\star\star}=(\chi
_{n}X)^{oo}=\chi_{n}\cdot X^{\star\star}$, we have $p\cdot(\chi_{n}\cdot
X^{\star\star})=\chi_{n}X$ for each $n.$ Now suppose $p\cdot x^{\star\star
}=x^{\star\star}$ for some $x^{\star\star}\in X^{\star\star}$. Then
\[
p\cdot(\chi_{n}\cdot x^{\star\star})=\chi_{n}\cdot(p\cdot x^{\star\star}%
)=\chi_{n}\cdot x^{\star\star}\in\chi_{n}X\subset X\tag{6*}
\]
for each $n.$ Furthermore, for each $x^{\star}\in X^{\star},$ $p\cdot
x^{\star\star}=x^{\star\star}$ implies $\mu_{x^{\star},x^{\star\star}}\in
C(K)_{\star}$ (Lemma \ref{l1}(3)). Whence, since $\chi_{n}\uparrow1$ in $C(K),$
we have $\{\chi_{n}\cdot x^{\star\star}\}$ converges to $x^{\star\star}$ in
$X^{\star\star}$ in the weak*-topology. By (6*), this means that $\{\chi
_{n}\cdot x^{\star\star}\}$ is a weak Cauchy sequence in $X.$ Therefore $(1)$
implies that $x^{\star\star}\in X$ and the proof of $(1)\Longrightarrow(5)$ is complete.
\end{proof}

\section{Examples.}

We will end the paper by presenting some examples of Banach spaces $X$ with a
Bade complete Boolean algebra of projections $\mathcal{B}$ defined on $X$ such
that $\mathcal{B}$ is of finite multiplicity on $X$ but $X$ is not finitely
generated. All the examples that we present are related to the example of
Dieudonn\'{e}~\cite{Die} mentioned in Remark~\ref{r7}. We need the following
definition that was given by Tzafriri~\cite{Tz1}. The definition is motivated
by the Dieudonn\'{e}'s example.

\begin{definition}
\label{d9} Let $\mathcal{B}$ be a Bade complete Boolean algebra of projections
on a Banach space $X.$ Suppose $\mathcal{B}$ is of uniform multiplicity $n$ on
$X.$ Then $\mathcal{B}$ has property $\mathcal{D}$ on $X$ if for any $x_{i}\in
X(i=1,\ldots,n)$, any $e\in\mathcal{B}\setminus\{0\}$, and any $p,\ 1\leq
p<n$, $eX$ is not equal to the sum of $eX(x_{1},\ldots,x_{p})$ and
$eX(x_{p+1},\ldots,x_{n})$.
\end{definition}

The example of Dieudonn\'{e} is a Banach $L^{\infty}[0,\gamma]$-module $X$
with two generators and $\mathcal{B}[0,\gamma]$ is Bade complete on it . Here
$L^{\infty}[0,\gamma]$ is the algebra of equivalence classes of bounded
Lebesgue measurable functions on the interval $[0,\gamma]$ and $\mathcal{B}%
[0,\gamma]$ is the Boolean algebra of idempotents in $L^{\infty}[0,\gamma].$
The length of the interval, $\gamma$, is as computed by Dieudonn\'{e}.
Dieudonn\'e shows that $\mathcal{B}[0,\gamma]$ has property $\mathcal{D}$ on
$X$ for $n=2.$ Given any $n\geq2,$ it is clear that by following
Dieudonn\'{e}'s procedure one may construct a Banach $L^{\infty}[0,\gamma
]$-module $X$ with $n$-generators such that $\mathcal{B}[0,\gamma]$ has
property $\mathcal{D}$ on $X$ for $n$. Also as mentioned in Remark~\ref{r7}
all these spaces are weakly sequentially complete since they are constructed
as closed subspaces of $KB$-spaces.

\begin{example}
(1) Let $X_{2}$ denote the original space of Dieudonn\'{e} in Remark~\ref{r7}.
That is, it is a Banach $L^{\infty}[0,\gamma]$-module with two generators and
$\mathcal{B}[0,\gamma]$ is Bade complete on $X_{2}$ with property
$\mathcal{D}$ for $n=2$. $\mathcal{B}[0,\gamma]$ is the Boolean algebra of the
characteristic functions of the Lebesgue measurable sets with positive measure
in $[0,\gamma].$ Similarly let $X_{n}$ denote the Banach $L^{\infty}%
[0,\gamma]$-module with $n$-generators such that $\mathcal{B}[0,\gamma]$ is
Bade complete on $X_{n}$ with property $\mathcal{D}$ for $n$ (constructed by
following Dieudonn\'{e}'s method as remarked above). Now consider
$l^{p}(\{X_{n}\})$ as an $l^{\infty}(\{L^{\infty}[0,\gamma]\})$-module where
$1\leq p<\infty$. It is to be understood that $\{x_{n}\}\in l^{p}(\{X_{n}\})$
means $x_{n}\in X_{n}$ for each $n$ and $\{||x_{n}||\}\in l^{p}$. Similar
understanding holds for the algebra $l^{\infty}(\{L^{\infty}[0,\gamma]\}).$
Let $\mathcal{B}$ be the Boolean algebra of the idempotents in $l^{\infty
}(\{L^{\infty}[0,\gamma]\}).$ Then $\mathcal{B}$ is of finite multiplicity on
$l^{p}(\{X_{n}\}).$ But $l^{p}(\{X_{n}\})$ is infinitely generated.

(2) Let $\Gamma$ be an uncountable set. Let $\{n_{\alpha}\}_{\alpha\in\Gamma}$
be an unbounded subset of $\mathbb{N}$. Now consider $l^{p}(\{X_{n_{\alpha}%
}\})$ as an $l^{\infty}(\{L^{\infty}[0,\gamma]_{n_{\alpha}}\})$-module as in
(1). Then once more $\mathcal{B}$ is of finite multiplicity on $l^{p}%
(\{X_{n_{\alpha}}\}).$ Moreover by Bade's theorem there exists a sequence
$\{e_{n}\}$ in $\mathcal{B}$ such that $\mathcal{B}$ is of uniform
multiplicity $n$ on $e_{n}l^{p}(\{X_{n_{\alpha}}\})$ and $\sup e_{n}=1$. Since
$\Gamma$ is uncountable some of the $e_{n}l^{p}(\{X_{n_{\alpha}}\})$ are
necessarily infinitely generated.

(3) Let $Y_{2}$ be the reflexive space in Example 3 of \cite{KO}. Since
$Y_{2}$ is constructed by altering $X_{2}$, one can construct similarly
$Y_{n}$ from $X_{n}$ above with the same properties. However, in addition,
$Y_{n}$ would be reflexive. So we can consider $l^{p}(\{Y_{n}\})$ and
$l^{p}(\{Y_{n_{\alpha}}\})$ for $1\leq p<\infty$. These spaces will have the
same multiplicity properties as their counterparts in (1) and (2).

Now all the spaces in (1), (2), (3) are weakly sequentially complete. But the
ones in (3) are, in addition, reflexive for $1<p<\infty.$ To see this,
initially consider an example as in (1). Take $\{x_{n}\}\in l^{p}(\{X_{n}\}).$
Let $X_{n}(x_{n})$ be the cyclic subspace generated by $x_{n}$ in $X_{n}$. The
space $X_{n}$ is weakly sequentially complete and $n$-generated. Hence, by
Theorem~\ref{t1}, as a Banach lattice, the cyclic subspace $X_{n}(x_{n})$ is a
$KB$-space. Now the cyclic subspace generated by $\{x_{n}\}$ in $l^{p}%
(\{X_{n}\})$ is given by the Banach lattice $l^{p}(\{X_{n}(x_{n})\})$. It is
immediate that $l^{p}(\{X_{n}(x_{n})\})$ is a $KB$-space and hence it is
weakly sequentially complete. Then, by Theorem~\ref{t2}, $l^{p}(\{X_{n}\})$ is
weakly sequentially complete. For examples in (2), suppose that
$\{x_{n_{\alpha}}\}\in l^{p}(\{X_{n_{\alpha}}\})$. Then $x_{n_{\alpha}}\neq0$
for at most countably many $\alpha\in\Gamma$. Whence, the argument given for
the examples in (1) shows that $l^{p}(\{X_{n_{\alpha}}\})$ is weakly
sequentially complete. This also covers the weak sequential completeness for
all the examples in (3). When $1<p<\infty$, to see that the spaces in (3) are
reflexive, it is sufficient to calculate the duals directly.
\end{example}

\end{document}